\newcommand{\bracketb}[1]{\Big[#1\Big]}
\newcommand{\paraa}[1]{\big(#1\big)}
\newcommand{\parab}[1]{\Big(#1\Big)}
\newcommand{\sgn}{\operatorname{sgn}}
\newtheorem{theorem}{Theorem}[section]
\newtheorem{lemma}[theorem]{Lemma}
\newtheorem{proposition}[theorem]{Proposition}
\newtheorem{example}[theorem]{Example}
\theoremstyle{definition}
\newtheorem{definition}[theorem]{Definition}
\theoremstyle{remark}
\numberwithin{equation}{section}
\newcommand{\K}{\mathbb{K}}
\newcommand{\xh}{\hat{x}}
\newcommand{\phit}{\phi_\tau}
\newcommand{\alphatautuple}{(\alpha_1,\ldots,\alpha_n,\tau)}
\newcommand{\A}{\mathcal{A}}
\newcommand{\X}{\mathcal{X}}
\newcommand{\Xt}{\tilde{\X}}
\newcommand{\Y}{\mathcal{Y}}
\newcommand{\Yt}{\tilde{Y}}
\newcommand{\Xb}{\bar{X}}
\newcommand{\id}{\operatorname{id}}
\newcommand{\spanvec}{\operatorname{span}}
\title[]{Construction of $n$-Lie algebras and \\ $n$-ary Hom-Nambu-Lie algebras}
\author{Joakim Arnlind}
\address{Max Planck Institute for Gravitational Physics (AEI), Am M\"uhlenberg 1, D-14476 Golm, Germany.}
\email{arnlind@aei.mpg.de}
\author{Abdenacer Makhlouf}
\address{Universit\'{e} de Haute Alsace,  Laboratoire de Math\'{e}matiques, Informatique et Applications,
4, rue des Fr\`eres Lumi\`ere F-68093 Mulhouse, France}
\email{abdenacer.makhlouf@uha.fr}
\author{Sergei Silvestrov}
\address{Centre for Mathematical Sciences,  Lund University, Box
   118, SE-221 00 Lund, Sweden}
\email{ssilvest@maths.lth.se}
\thanks{
This work was partially supported by The Swedish Foundation for
International Cooperation in Research and Higher Education (STINT), The SIDA Foundation, The
Swedish Research Council, The Royal Swedish Academy of Sciences, The Crafoord Foundation and The Letterstedtska
F{\"o}reningen.}
\subjclass[2010]{17A42,17A30,17A40,17D99}
\keywords{Hom-Nambu-Lie algebra, Hom-Lie algebra, Nambu-Lie algebra, trace}
\begin{document}

\begin{abstract}
  We present a procedure to construct $(n+1)$-Hom-Nambu-Lie algebras
  from $n$-Hom-Nambu-Lie algebras equipped with a generalized trace
  function. It turns out that the implications of the compatibility
  conditions, that are necessary for this construction, can be
  understood in terms of the kernel of the trace function and the
  range of the twisting maps. Furthermore, we investigate the
  possibility of defining $(n+k)$-Lie algebras from $n$-Lie algebras
  and a $k$-form satisfying certain conditions.
\end{abstract}

\maketitle

\section{Introduction}\label{sec:intro}

\noindent Lie algebras and Poisson algebras have played an extremely
important role in mathematics and physics for a long time. Their
generalizations, known as $n$-Lie algebras and ``Nambu algebras''
\cite{f:nliealgebras,n:generalizedmech,Takhtajan} also arise naturally
in physics and have, for instance, been studied in the context of
``M-branes'' \cite{BL2007,HoppeJMalgNambumech}. Moreover, it has
recently been shown that the differential geometry of $n$-dimensional
Riemannian submanifolds can be described in terms of an $n$-ary Nambu
algebra structure on the space of smooth functions on the manifold
\cite{ahh:multilinear}.

A long-standing problem related to Nambu algebras is their
quantization. For Poisson algebras, the problem of finding an operator
algebra where the commutator Lie algebra corresponds to the Poisson
algebra is a well-studied problem, e.g. in the context of matrix
regularizations
\cite{abhhs:fuzzy,abhhs:noncommutative,a:phdthesis,a:repcalg,as:affinecrossed}.
For higher order algebras much less is known and the corresponding
problem seems to be difficult to study. A Nambu-Lie algebra is defined
in general by an $n$-ary multilinear multiplication which is skew-symmetric and satisfies an identity extending the Jacobi identity for
the Lie algebras. For $n=3$ this identity is
\begin{equation*}\label{TernaryNambuIdentityOrdinary}
  [ x_{1},x_{ 2}, [x_{3},x_{4},x_{5}]] =
  [ [x_1,x_2,x_3 ],x_{4},x_{5}]
  + [ x_3,[ x_1,x_2,x_{4}] , x_{5}]+
  [x_3,x_{4},[x_1,x_2,x_{5}]].
\end{equation*}
In Nambu-Lie algebras, the additional freedom in comparison with Lie
algebras is mainly limited to extra arguments in the multilinear
multiplication. The identities of Nambu-Lie algebras are also
closely resembling the identities for Lie algebras. As a result,
there is a close similarity between Lie algebras and Nambu-Lie
algebras in their appearances in connection to other algebraic and
analytic structures and in the extent of their applicability.  Thus
it is not surprising that it becomes unclear how to associate in
meaningful ways ordinary Nambu-Lie algebras with the important in
physics generalizations and quantum deformations of Lie algebras
when typically the ordinary skew-symmetry and Jacobi identities of
Lie algebras are violated. However, if the class of Nambu-Lie
algebras is extended with enough extra structure beyond just adding
more arguments in multilinear multiplication, the natural ways of
association of such multilinear algebraic structures with
generalizations and quantum deformations of Lie algebras may become
feasible. Hom-Nambu-Lie algebras are defined by a similar but more
general identity than that of Nambu-Lie algebras involving some
additional linear maps. These linear maps twisting or deforming
the main identities introduce substantial new freedom in the
structure allowing to consider Hom-Nambu-Lie algebras as
deformations of Nambu-Lie algebras ($n$-Lie algebras). The extra
freedom built into the structure of Hom-Nambu-Lie algebras may
provide a path to quantization beyond what is possible for ordinary
Nambu-Lie algebras.  All this gives also important motivation for
investigation of mathematical concepts and structures such as
Leibniz $n$-ary algebras \cite{CassasLodayPirashvili,f:nliealgebras}
and their modifications and extensions, as well as Hom-algebra
extensions of Poisson algebras \cite{HomDeform}. For discussion of physical
applications of these and related algebraic structures to models for
elementary particles, and unification problems for interactions see
\cite{Kerner7,Kerner,Kerner2,Kerner4,Kerner6}.

The general Hom-algebra structures arose first in connection to
quasi-deformation and discretizations of Lie algebras of vector
fields \cite{HLS, LS2}. These quasi-deformations lead to quasi-Lie algebras, quasi-Hom-Lie algebras and Hom-Lie algebras, 
which are generalized Lie algebra structures with twisted skew-symmetry and
Jacobi conditions.  
The first motivating examples in physics and mathematics literature 
are $q$-deformations of the Witt and Virasoro algebras constructed in the investigations of vertex models in   
conformal field theory \cite{AizawaSaito,ChaiElinPop,ChaiKuLukPopPresn,ChaiKuLuk,ChaiPopPres,CurtrZachos1,
DaskaloyannisGendefVir,Hu,LiuKQuantumCentExt,LiuKQCharQuantWittAlg,LiuKQPhDthesis}.  
Motivated by these and new examples arising as applications of the general quasi-deformation
construction of \cite{HLS,LS1,LS2} on the one hand, and the desire to
be able to treat within the same framework such well-known
generalizations of Lie algebras as the color and super Lie algebras on
the other hand, quasi-Lie algebras and subclasses of quasi-Hom-Lie
algebras and Hom-Lie algebras were introduced in
\cite{HLS,LS1,LS2,LS3,MS}. 
In Hom-Lie algebras,
skew-symmetry is untwisted, whereas the Jacobi identity is twisted by
a single linear map and contains three terms as for Lie algebras,
reducing to the Jacobi identity for ordinary Lie algebras when the linear twisting map is the
identity map. 

In this paper, we will be concerned with
$n$-Hom-Nambu-Lie algebras, a class of $n$-ary algebras generalizing $n$-ary algebras of Lie type
including $n$-ary Nambu algebras, $n$-ary Nambu-Lie algebras and
$n$-ary Lie algebras \cite{ams:gennambu,ams:ternary}.
In \cite{ams:ternary}, a method was demonstrated of how to construct
ternary multiplications from the binary multiplication of a Hom-Lie algebra, a linear twisting map, and a trace function satisfying certain compatibility conditions; and it was shown that this method can be used to construct
ternary Hom-Nambu-Lie algebras from Hom-Lie algebras.
In this article we extend the results and the binary-to-ternary construction of \cite{ams:ternary} 
to the general case of $n$-ary algebras.  
This paper is organized as follows.  In Section \ref{sec:prel} we
review basic concepts of Hom-Lie, and $n$-Hom-Nambu-Lie algebras. In
Section \ref{sec:construction} we provide a construction procedure of
$(n+1)$-Hom-Nambu-Lie algebras starting from an $n$-Hom-Nambu-Lie algebra
and a trace function satisfying certain compatibility conditions
involving the twisting maps. To this end, we use the ternary bracket
introduced in \cite{amdy:quantnambu}.  In Section
\ref{sec:compatible}, we investigate how restrictive the
compatibility conditions are. The mutual position of kernels of
twisting maps and the trace play an important role in this context.
Finally, in Section \ref{sec:higherorder}, we investigate the
possibility to define $(n+k)$-Lie algebras starting from an $n$-Lie
algebra and a $k$-form satisfying certain conditions.

\section{Preliminaries}\label{sec:prel}

In \cite{ams:gennambu}, generalizations of $n$-ary algebras of Lie
type and associative type by twisting the identities using linear maps
have been introduced. These generalizations include $n$-ary
Hom-algebra structures generalizing the $n$-ary algebras of Lie type
including $n$-ary Nambu algebras, $n$-ary Nambu-Lie algebras and
$n$-ary Lie algebras, and $n$-ary algebras of associative type
including $n$-ary totally associative and $n$-ary partially
associative algebras.

\begin{definition} \label{def:HomLie} A \emph{Hom-Lie algebra}
  $(V,[\cdot, \cdot], \alpha)$ is a vector space $V$ together with a
  skew-symmetric bilinear map $[\cdot, \cdot]: V\times V \rightarrow
  V$ and a linear map $\alpha: V \rightarrow V$ satisfying
  \begin{align*}
    [\alpha(x),[y,z]] = [[x,y],\alpha(z)] + [\alpha(y),[x,z]]
  \end{align*}
  for all $x,y,z\in V$.
\end{definition}

\begin{definition}\label{def:nHomNambuLie}
  A $n$-Hom-Nambu-Lie algebra
  $(V,[\cdot,\ldots,\cdot],\alpha_1,\ldots,\alpha_{n-1})$ is a vector
  space $V$ together with a skew-symmetric multilinear map
  $[\cdot,\ldots,\cdot]:V^n\to V$ and linear maps
  $\alpha_1,\ldots,\alpha_{n-1}:V\to V$ such that
  \begin{align*}
    &[\alpha_1(x_1),\ldots,\alpha_{n-1}(x_{n-1}),[y_1,\ldots,y_n]]\\
    &= \sum_{k=1}^n [\alpha_1(y_1),\ldots,\alpha_{k-1}(y_{k-1}),
    [x_1,\ldots,x_{n-1},y_k],\alpha_k(y_{k+1}),\ldots,\alpha_{n-1}(y_n)]
  \end{align*}
  for all $x_1,\ldots,x_{n-1},y_1,\ldots,y_n\in V$.  The linear maps
  $\alpha_1,\ldots,\alpha_{n-1}$ are called the \emph{twisting maps}
  of the Hom-Nambu-Lie algebra. A $n$-Lie algebra is an $n$-Hom-Lie
  algebra with $\alpha_1=\alpha_2=\cdots=\alpha_{n-1}=\id_V$.
\end{definition}

\section{Construction of Hom-Nambu-Lie algebras}\label{sec:construction}

\noindent In \cite{ams:ternary}, the authors introduced a procedure to
induce $3$-Hom-Nambu-Lie algebras from Hom-Lie algebras. In the
following, we shall extend this procedure to induce a
$(n+1)$-Hom-Nambu-Lie algebra from an $n$-Hom-Nambu-Lie algebra.  Let
us start by defining the skew-symmetric map that will be used to
induce the higher order algebra.  In the following, $\K$ denotes a field
of characteristic 0, and $V$ a vector space over $\K$.

\begin{definition}\label{def:phitau}
  Let $\phi:V^n\to V$ be an $n$-linear map and let $\tau$ be a map from
  $V$ to $\K$. Define $\phit:V^{n+1}\to V$ by
  \begin{align}
    \phit(x_1,\ldots,x_{n+1}) = \sum_{k=1}^{n+1}(-1)^k\tau(x_k)\phi(x_1,\ldots,\xh_k,\ldots,x_{n+1}),
  \end{align}
where the hat over $\xh_k$ on the right hand side means that $x_{k}$ is excluded, that is 
 $\phi$ is calculated on $(x_1,\ldots, x_{k-1}, x_{k+1}, \ldots, x_{n+1})$. 
\end{definition}
We will not be concerned with just any linear map $\tau$,
but rather maps that have a generalized trace property. Namely

\begin{definition}\label{def:phitrace}
  For $\phi:V^n\to V$ we call a linear map $\tau:V\to\K$ a
  \emph{$\phi$-trace} if $\tau\paraa{\phi(x_1,\ldots,x_n)}=0$ for all
  $x_1,\ldots,x_n\in V$.
\end{definition}

\begin{lemma}\label{lemma:phitlinantisym}
  Let $\phi:V^n\to V$ be a totally skew-symmetric $n$-linear map and
  $\tau$ a linear map $V\to\K$. Then $\phit$ is a $(n+1)$-linear
  totally skew-symmetric map. Furthermore, if $\tau$ is a $\phi$-trace
  then $\tau$ is a $\phit$-trace.
\end{lemma}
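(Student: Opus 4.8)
The plan is to verify the three claims in turn — multilinearity, total skew-symmetry, and the trace property — proceeding from the explicit formula in Definition \ref{def:phitau}.

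Multilinearity of $\phit$ is immediate: in each summand $(-1)^k\tau(x_k)\phi(x_1,\ldots,\xh_k,\ldots,x_{n+1})$, the factor $\tau(x_k)$ is linear in $x_k$ by assumption and constant in the other arguments, while $\phi(x_1,\ldots,\xh_k,\ldots,x_{n+1})$ is linear in each argument it involves and constant in $x_k$; a product of a scalar linear in one variable with a vector linear in the complementary variables is multilinear, and a sum of multilinear maps is multilinear.

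For skew-symmetry, since the symmetric group $S_{n+1}$ is generated by adjacent transpositions, it suffices to show that swapping $x_i$ and $x_{i+1}$ changes the sign of $\phit$. Splitting the defining sum into the two terms $k=i$, $k=i+1$ and the remaining $n-1$ terms, one checks: in a term with $k\neq i,i+1$, the scalar $\tau(x_k)$ is untouched and $\phi(x_1,\ldots,\xh_k,\ldots,x_{n+1})$ has $x_i$ and $x_{i+1}$ in adjacent slots, so by skew-symmetry of $\phi$ that term flips sign; the two terms $k=i$ and $k=i+1$ get interchanged with each other, and the bookkeeping of the signs $(-1)^i$ versus $(-1)^{i+1}$ together with the fact that $\phi(\ldots,\xh_i,\ldots)$ and $\phi(\ldots,\xh_{i+1},\ldots)$ differ by the adjacent transposition needed to line them up, produces exactly the overall sign $-1$. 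This sign-chasing for the $k=i,i+1$ pair is the one spot requiring care, so I would write it out explicitly; everything else is mechanical.

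Finally, suppose $\tau$ is a $\phi$-trace. Each summand of $\phit(x_1,\ldots,x_{n+1})$ is a scalar multiple of a value $\phi(y_1,\ldots,y_n)$ of $\phi$, hence lies in $\operatorname{span}\{\phi(y_1,\ldots,y_n):y_i\in V\}$; applying $\tau$ and using linearity of $\tau$ together with $\tau(\phi(y_1,\ldots,y_n))=0$ gives $\tau(\phit(x_1,\ldots,x_{n+1}))=\sum_{k=1}^{n+1}(-1)^k\tau(x_k)\,\tau(\phi(x_1,\ldots,\xh_k,\ldots,x_{n+1}))=0$, so $\tau$ is a $\phit$-trace. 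I expect no genuine obstacle here; the only delicate point in the whole lemma is the sign computation in the skew-symmetry argument.
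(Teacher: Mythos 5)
Your proposal is correct, and the multilinearity and trace parts coincide with the paper's argument essentially word for word. The skew-symmetry step, however, takes a genuinely different route: the paper observes that
\begin{align*}
  \phit(x_1,\ldots,x_{n+1}) = -\frac{1}{n!}\sum_{\sigma\in S_{n+1}}\sgn(\sigma)\,
  \tau(x_{\sigma(1)})\phi(x_{\sigma(2)},\ldots,x_{\sigma(n+1)}),
\end{align*}
i.e.\ that $\phit$ is (up to a factor) the full antisymmetrization of $\tau\otimes\phi$, which is manifestly alternating; verifying this identity amounts to grouping the $n!$ permutations with $\sigma(1)=k$ and using skew-symmetry of $\phi$. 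You instead check the effect of an adjacent transposition $x_i\leftrightarrow x_{i+1}$ directly on the defining sum. Both are valid; the paper's version is slicker once the identity is accepted, while yours is more elementary and avoids dividing by $n!$ (so it does not even need $\operatorname{char}\K=0$ for this step). One small correction to your sign bookkeeping for the pair $k=i,\,i+1$: after the swap, the $k=i$ term is $(-1)^i\tau(x_{i+1})\phi(x_1,\ldots,x_{i-1},x_i,x_{i+2},\ldots,x_{n+1})$, whose $\phi$-argument list is \emph{already identical}, in the same order, to that of the original $k=i+1$ term; no additional transposition inside $\phi$ is needed, and the factor $-1$ comes solely from $(-1)^i$ versus $(-1)^{i+1}$. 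Had you inserted an extra transposition sign there, you would get $+1$ and the argument would break, so when you ``write it out explicitly'' make sure only the prefactor contributes.
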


\begin{proof}
  The $(n+1)$-linearity property of $\phit$ follows from $n$-linearity of $\phi$ and linearity of $\tau$
  as it is a linear combination of $(n+1)$-linear maps 
  $$\tau(x_k)\phi(x_1,\ldots,\xh_k,\ldots,x_{n+1}), \quad 1\leq k \leq n+1.$$
  To prove total skew-symmetry one simply notes that
  \begin{align*}
    \phit(x_1,\ldots,x_{n+1}) = -\frac{1}{n!}\sum_{\sigma\in S_{n+1}}\sgn(\sigma)
    \tau(x_{\sigma(1)})\phi(x_{\sigma(2)},\ldots,x_{\sigma(n+1)}),
  \end{align*}
  which is clearly skew-symmetric. Since each term in $\phit$ is
  proportional to $\phi$ and since $\tau$ is linear and a
  $\phi$-trace, $\tau$ will also be a $\phit$-trace.
\end{proof}

\noindent The extension of Theorem 3.3 in \cite{ams:ternary} can now
be formulated as follows:

\begin{theorem}\label{thm:inducedhomnambulie}
  Let $(V,\phi,\alpha_1,\ldots,\alpha_{n-1})$ be a
  $n$-Hom-Nambu-Lie algebra, $\tau$ a $\phi$-trace and
  $\alpha_n:V\to V$ a linear map. If it holds that
  \begin{align}
    &\tau\paraa{\alpha_i(x)}\tau(y)=\tau(x)\tau\paraa{\alpha_i(y)}\label{eq:tautaualpha}\\
    &\tau\paraa{\alpha_i(x)}\alpha_j(y)=\alpha_i(x)\tau\paraa{\alpha_j(y)}\label{eq:taualphaalpha}
  \end{align}
  for all $i,j\in\{1,\ldots,n\}$ and all $x,y\in V$, then
  $(V,\phit,\alpha_1,\ldots,\alpha_n)$ is a $(n+1)$-Hom-Nambu-Lie
  algebra. We shall say that $(V,\phit,\alpha_1,\ldots,\alpha_n)$ is
  \emph{induced from $(V,\phi,\alpha_1,\ldots,\alpha_{n-1})$}.
\end{theorem}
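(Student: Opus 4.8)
The plan is to verify the defining identity of an $(n+1)$-Hom-Nambu-Lie algebra for $(V,\phit,\alpha_1,\ldots,\alpha_n)$ directly. By Lemma \ref{lemma:phitlinantisym} the map $\phit$ is already known to be $(n+1)$-linear and totally skew-symmetric, and $\tau$ is a $\phit$-trace, so the only thing left is the twisted Nambu identity
\begin{align*}
  &\phit\paraa{\alpha_1(x_1),\ldots,\alpha_n(x_n),\phit(y_1,\ldots,y_{n+1})}\\
  &= \sum_{k=1}^{n+1}\phit\paraa{\alpha_1(y_1),\ldots,\alpha_{k-1}(y_{k-1}),\phit(x_1,\ldots,x_n,y_k),\alpha_k(y_{k+1}),\ldots,\alpha_n(y_{n+1})}.
\end{align*}
First I would expand the left-hand side using the definition of $\phit$ twice. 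The outer expansion runs over which of the $n{+}1$ slots $\alpha_1(x_1),\ldots,\alpha_n(x_n),\phit(y_1,\ldots,y_{n+1})$ is removed. Because $\tau$ is a $\phi$-trace, hence (by the lemma) a $\phit$-trace, the term in which the last slot $\phit(y_1,\ldots,y_{n+1})$ is the one multiplied by $\tau$ vanishes: $\tau\paraa{\phit(y_1,\ldots,y_{n+1})}=0$. So only $n$ outer terms survive, each of the form $\pm\,\tau(\alpha_i(x_i))\,\phi\paraa{\alpha_1(x_1),\ldots,\widehat{\alpha_i(x_i)},\ldots,\alpha_n(x_n),\phit(y_1,\ldots,y_{n+1})}$, and inside each of these I expand $\phit(y_1,\ldots,y_{n+1})=\sum_j(-1)^j\tau(y_j)\phi(y_1,\ldots,\xh_j,\ldots,y_{n+1})$.

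Next I would expand the right-hand side similarly. Each summand over $k$ contains the argument $\phit(x_1,\ldots,x_n,y_k)=\sum_{i=1}^{n}(-1)^i\tau(x_i)\phi(x_1,\ldots,\xh_i,\ldots,x_n,y_k) + (-1)^{n+1}\tau(y_k)\phi(x_1,\ldots,x_n)$; the very last piece is killed since $\tau$ is a $\phi$-trace, leaving only terms with a factor $\tau(x_i)$. Then the outer $\phit$ on the right is expanded over which of its $n{+}1$ slots is multiplied by $\tau$. Here the key move is to use the compatibility hypotheses \eqref{eq:tautaualpha} and \eqref{eq:taualphaalpha}: every scalar factor $\tau(\alpha_i(x_i))$ and every occurrence of $\alpha_j$ hitting the $y$'s can be shuffled so that the $\tau$ always lands on the same argument $x_i$ (giving a plain $\tau(x_i)$) while the $\alpha$'s are redistributed consistently; concretely, \eqref{eq:tautaualpha} lets one slide $\tau\circ\alpha_i$ off one argument and onto another, and \eqref{eq:taualphaalpha} lets one trade $\tau(\alpha_i(x))\,\alpha_j(y)$ for $\alpha_i(x)\,\tau(\alpha_j(y))$. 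After this normalization every term on both sides is a scalar $\tau(x_i)$ (for some $i\in\{1,\ldots,n\}$) times an expression of the shape
\begin{align*}
  \pm\,\tau(y_j)\,\phi\paraa{\alpha_1(y_1),\ldots,[\text{one slot}=\phi(\ldots,x,\ldots)],\ldots,\alpha_{n-1}(y_{n+1})}
\end{align*}
with $\phi$ replaced by the original $n$-ary bracket and the arguments drawn from the $x$'s and $y$'s exactly as in the $n$-Hom-Nambu-Lie identity of $(V,\phi,\alpha_1,\ldots,\alpha_{n-1})$.

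The heart of the argument is then: fix $i$, factor out $\tau(x_i)$, and observe that the collection of surviving terms on the left equals $\tau(x_i)$ times the left-hand side of the Nambu identity for $\phi$ with the $(n-1)$ ``acting'' arguments $x_1,\ldots,\xh_i,\ldots,x_n$, while the surviving terms on the right collect, after the same factoring and after using the skew-symmetry of $\phi$ to reorder arguments, into $\tau(x_i)$ times the right-hand side of that same identity — plus, on each side, the ``diagonal'' terms where the $\phi$-trace property forces a vanishing. Summing over $i$ and invoking the $n$-Hom-Nambu-Lie identity for $\phi$ term by term gives the result. The main obstacle I anticipate is purely combinatorial bookkeeping: keeping the signs straight through the two nested expansions (each $\phit$ contributes a $(-1)^k$ and deleting a slot shifts indices), and making sure that the redistribution of the $\alpha_j$'s via \eqref{eq:taualphaalpha} is applied uniformly so that left and right truly match slot-by-slot rather than only after a further resummation. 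It will help to treat the two ``degenerate'' families of terms — those where $\tau$ hits the inner bracket, and those where a repeated argument makes $\phi$ or $\phit$ vanish by skew-symmetry — separately and first, so that the remaining comparison is a clean application of the hypothesis on $\phi$.
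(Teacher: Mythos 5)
Your overall strategy coincides with the paper's: expand both sides of the twisted Nambu identity via the definition of $\phit$, discard the term where $\tau$ lands on an inner bracket using the trace property, use the compatibility conditions to normalize the placement of the $\alpha$'s, and reduce the remaining comparison to the Hom-Nambu identity for $\phi$. However, there is a genuine error in your accounting of the right-hand side. When you expand the inner bracket $\phit(x_1,\ldots,x_n,y_k)$ you claim that the piece $(-1)^{n+1}\tau(y_k)\phi(x_1,\ldots,x_n)$ is ``killed since $\tau$ is a $\phi$-trace.'' It is not: here $\tau$ is evaluated on $y_k$, not on the output of $\phi$, so the trace property says nothing about it. Substituting this piece into the outer $\phit$ produces the whole family of terms
\begin{align*}
\sum_{k=1}^{n+1}\tau(y_k)\,\phit\paraa{\alpha_1(y_1),\ldots,\alpha_{k-1}(y_{k-1}),\phi(x_1,\ldots,x_n),\alpha_k(y_{k+1}),\ldots,\alpha_n(y_{n+1})},
\end{align*}
which is exactly the quantity the paper isolates as $A^{(1)}$. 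Within each summand only the single sub-term where the outer $\tau$ hits $\phi(x_1,\ldots,x_n)$ vanishes by the trace property; the remaining sub-terms carry scalar factors of the form $\tau(y_k)\tau\paraa{\alpha_{l-1}(y_l)}$ and do not vanish individually.

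The paper disposes of these terms by a separate pairing argument: for each pair $k<l$ it matches the term containing $\tau(y_k)\tau\paraa{\alpha_{l-1}(y_l)}$ against the one containing $\tau(y_l)\tau\paraa{\alpha_k(y_k)}$, uses (\ref{eq:taualphaalpha}) and (\ref{eq:tautaualpha}) to align the $\alpha$ indices, and then observes that the difference $\tau(y_k)\tau\paraa{\alpha_{l-1}(y_l)}-\tau(y_l)\tau\paraa{\alpha_{l-1}(y_k)}$ vanishes precisely by hypothesis (\ref{eq:tautaualpha}). This is the step where (\ref{eq:tautaualpha}) does its real work, and it is absent from your outline; without it an entire family of nonzero terms is unaccounted for and the two sides do not match. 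The remainder of your plan --- collecting the $\tau(x_i)$-terms, shuffling the $\alpha$'s so that $\alpha_n$ stays outside the inner bracket, and invoking the Hom-Nambu identity for $\phi$ on $x_1,\ldots,\hat{x}_i,\ldots,x_n$ and the $y$'s --- is essentially the paper's treatment of $A^{(2)}-B$ and is sound modulo the sign bookkeeping you already flag.
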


\begin{proof}
  Since $\phit$ is skew-symmetric and multilinear by Lemma
  \ref{lemma:phitlinantisym}, one only has to check that the
  Hom-Nambu-Jacobi identity is fulfilled. This identity is written as
  \begin{align*}
    \sum_{s=1}^{n+1}\phit\paraa{\alpha_1(u_1),&\ldots,\alpha_{s-1}(u_{s-1}),\phit(x_1,\ldots,x_n,u_s),\alpha_s(u_{s+1}),\ldots,\alpha_n(u_{n+1})}\\
    &-\phit\paraa{\alpha_1(x_1),\ldots,\alpha_n(x_n),\phit(u_1,\ldots,u_{n+1})}=0.
  \end{align*}
  Let us write the left-hand-side of this equation as $A-B$ where
  \begin{align*}
    A &= \sum_{s=1}^{n+1}\phit\paraa{\alpha_1(u_1),\ldots,\alpha_{s-1}(u_{s-1}),\phit(x_1,\ldots,x_n,u_s),\alpha_s(u_{s+1}),\ldots,\alpha_n(u_{n+1})}\\
    B &= \phit\paraa{\alpha_1(x_1),\ldots,\alpha_n(x_n),\phit(u_1,\ldots,u_{n+1})}.
  \end{align*}
  Furthermore, we expand $B$ into terms $B_{kl}$ such that
  \begin{align*}
    &B_{kl} = (-1)^{k+l} \tau\paraa{\alpha_k(x_k)}\tau(u_l) \times \\
    & \qquad\qquad\qquad\qquad  \times \phi\paraa{\alpha_1(x_1),\ldots,\widehat{\alpha_{k}(x_k)},\ldots,\alpha_n(x_n),\phi(u_1,\ldots,\hat{u}_l,\ldots,u_{n+1})}\\
    &B = \sum_{k=1}^{n}\sum_{l=1}^{n+1} B_{kl},
  \end{align*}
  taking into account that $\tau$ is a $\phi$-trace form. We expand $A$ as
  \begin{align*}
    A = &(-1)^{n+1}\sum_{s=1}^{n+1}\tau(u_s)\phit\paraa{\alpha_1(u_1),\ldots,\phi(x_1,\ldots,x_n),\ldots,\alpha_n(u_{n+1})}\\
    &+\sum_{s=1}^{n+1}\sum_{k=1}^n(-1)^k\tau(x_k)\phit\paraa{\alpha_1(u_1),\ldots,\phi(x_1,\ldots,\hat{x}_{k},\ldots,x_n,u_s),\ldots,\alpha_n(u_{n+1})}\\
    &\equiv (-1)^{n+1}A^{(1)}+A^{(2)}.
  \end{align*}
  Let us now show that $A^{(1)}=0$. For every choice of integers
  $k<l\in\{1,\ldots,n+1\}$, $A^{(1)}$ contains two terms where one
  $\tau$ involves $u_k$ and the other $\tau$ involves $u_l$. Namely,
  \begin{align*}
    A^{(1)} = &\sum_{k<l=1}^{n+1}(-1)^l\tau(u_k)\tau\paraa{\alpha_{l-1}(u_l)}\times\\
    &\quad\times\phi\paraa{\alpha_1(u_1),\ldots,\alpha_{k-1}(u_{k-1}),\phi(x_1,\ldots,x_n),\ldots,\widehat{\alpha_{l-1}(u_l)},\ldots,\alpha_n(u_{n+1})}\\
    &\quad+\sum_{k<l=1}^{n+1}(-1)^k\tau(u_l)\tau\paraa{\alpha_k(u_k)}\times\\
    &\quad\quad\times\phi\paraa{\alpha_1(u_1),\ldots,\widehat{\alpha_{k}(u_k)},\ldots,\phi(x_1,\ldots,x_n),\alpha_{l}(u_{l+1}),\ldots,\alpha_{n}(u_{n+1})}.
  \end{align*}
  By using relations (\ref{eq:tautaualpha}) and (\ref{eq:taualphaalpha}) one can write these two terms together as
  \begin{align*}
    A^{(1)}=&\sum_{k<l=1}^{n+1}(-1)^l\bracketb{\tau(u_k)\tau\paraa{\alpha_{l-1}(u_l)}-\tau(u_l)\tau\paraa{\alpha_{l-1}(u_k)}}\times\\
    &\quad\times
    \phi\paraa{\alpha_1(u_1),\ldots,\alpha_{k-1}(u_{k-1}),\phi(x_1,\ldots,x_n),\ldots,\widehat{\alpha_{l-1}(u_l)},\ldots,\alpha_n(u_{n+1})},
  \end{align*}
  and it follows from (\ref{eq:tautaualpha}) that each
  term in this sum is zero. Let us now consider the expansion of $A^{(2)}$ via
  \begin{align*}
    &A^{(2)}_{kl}=\sum_{s=1}^{l-1}(-1)^{k+l}\tau(x_k)\tau\paraa{\alpha_{l-1}(u_l)}\times\\
    &\qquad\qquad\times\phi\paraa{\alpha_1(u_1),\ldots,\phi(x_1,\ldots,\hat{x}_k,\ldots,x_n,u_s),\ldots,\widehat{\alpha_{l-1}(u_l)},\ldots,\alpha_n(u_{n+1})}\\
    &\qquad\quad+\sum_{s=l+1}^{n+1}(-1)^{k+l}\tau(x_k)\tau\paraa{\alpha_{l}(u_l)}\times\\
    &\qquad\qquad\qquad\times\phi\paraa{\alpha_1(u_1),\ldots,\widehat{\alpha_l(u_l)},\ldots,\phi(x_1,\ldots,\hat{x}_k,\ldots,x_n,u_s),\ldots,\alpha_n(u_{n+1})}.
  \end{align*}
  One notes that relations (\ref{eq:tautaualpha}) and
  (\ref{eq:taualphaalpha}) allows one to swap any $\alpha_i$ and
  $\alpha_j$ in the expression above. Therefore, $B_{kl}-A^{(2)}_{kl}$
  will be proportional to the Hom-Nambu-Jacobi identity for $\phi$
  (acting on the elements
  $x_1,\ldots,\hat{x}_k,\ldots,x_n,u_1,\ldots,\hat{u}_l,\ldots,u_{n+1}$)
  since one can make sure that $\alpha_n$, which is not one of the
  twisting maps of the original Hom-Nambu-Lie algebra, appears outside
  any bracket. Hence, $A^{(2)}-B=0$ and the Hom-Nambu-Jacobi identity
  is satisfied for $\phit$.
\end{proof}

\noindent Since $\tau$ is also a $\phit$-trace, one can repeat the
procedure in Theorem \ref{thm:inducedhomnambulie} to induce a
$(n+2)$-Hom-Nambu-Lie algebra from an $n$-Hom-Nambu-Lie
algebra. However, the result is an abelian algebra.

\begin{proposition}\label{prop:doubleAbelian}
  Let $\A=(V,\phi,\alpha_1,\ldots,\alpha_{n-1})$ be a
  $n$-Hom-Nambu-Lie algebra. Let $\A'$ be any $(n+1)$-Hom-Nambu-Lie
  algebra induced from $\A$ via the $\phi$-trace $\tau$. If $\A''$ is
  a $(n+2)$-Hom-Nambu-Lie algebra induced from $\A'$ using the same 
  $\tau$ again, then $\A''$ is abelian.
\end{proposition}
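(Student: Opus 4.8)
The plan is to observe first that this is a purely formal identity. By construction the $(n+2)$-ary multiplication of $\A''$ is $(\phit)_\tau$, that is, the map obtained by applying the operation of Definition \ref{def:phitau} to $\phit$; the particular twisting maps attached to $\A'$ and $\A''$ are irrelevant to the conclusion, and in fact conditions \eqref{eq:tautaualpha}--\eqref{eq:taualphaalpha} are not used for this statement. So the proposition amounts to the claim that applying $\phi\mapsto\phit$ twice, with the same $\tau$, yields the zero map: $(\phit)_\tau\equiv 0$.

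The underlying reason is that $\phi\mapsto\phit$ is, up to an overall sign, the operation of wedging the $V$-valued alternating $n$-form $\phi$ with the scalar $1$-form $\tau$, so iterating it is wedging with $\tau\wedge\tau=0$. To phrase this purely in the notation of the paper I would expand directly. Write $\psi=\phit$ and apply Definition \ref{def:phitau} twice:
\begin{align*}
  (\psi)_\tau(x_1,\ldots,x_{n+2})
  &=\sum_{l=1}^{n+2}(-1)^l\tau(x_l)\,\psi(x_1,\ldots,\xh_l,\ldots,x_{n+2})\\
  &=\sum_{l=1}^{n+2}\ \sum_{\substack{k=1\\ k\neq l}}^{n+2}(-1)^{l+p(k,l)}\,\tau(x_l)\tau(x_k)\,\phi(\ldots,\xh_k,\ldots,\xh_l,\ldots),
\end{align*}
where $p(k,l)$ denotes the position of $x_k$ in the $(n+1)$-tuple obtained from $(x_1,\ldots,x_{n+2})$ by deleting $x_l$, so that $p(k,l)=k$ for $k<l$ and $p(k,l)=k-1$ for $k>l$.

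The final step is to group, for each pair of indices $a<b$, the two contributions with $\{k,l\}=\{a,b\}$. Both carry the same scalar factor $\tau(x_a)\tau(x_b)$ and the same value $\phi\paraa{x_1,\ldots,\xh_a,\ldots,\xh_b,\ldots,x_{n+2}}$, since the surviving arguments occur in their original order in both cases; their signs, however, are $(-1)^{b+p(a,b)}=(-1)^{a+b}$, coming from $(l,k)=(b,a)$, and $(-1)^{a+p(b,a)}=(-1)^{a+b-1}$, coming from $(l,k)=(a,b)$, which are opposite. Hence the two contributions cancel, the whole sum vanishes, and $(\phit)_\tau\equiv 0$; in particular the bracket of $\A''$ is identically zero, so $\A''$ is abelian.

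There is no genuine obstacle here: the only point requiring attention is the index shift recorded by $p(k,l)$ — deleting the argument with the smaller index moves the one with the larger index one place to the left — which is precisely what produces the sign mismatch driving the pairwise cancellation. Everything else is routine bookkeeping.
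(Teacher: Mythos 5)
Your proof is correct and follows essentially the same route as the paper's: expand $(\phit)_\tau$ fully, and for each pair $a<b$ pair the two terms proportional to $\tau(x_a)\tau(x_b)$, which cancel because their signs are $(-1)^{a+b}$ and $(-1)^{a+b-1}$. Your explicit bookkeeping of the index shift $p(k,l)$ just spells out the sign discrepancy that the paper records as $(-1)^{k+l}+(-1)^{k+l-1}=0$.
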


\begin{proof}
  By the definition of $\phit$, the bracket on the algebra $\A''$ can be
  written as
  \begin{align*}
    \paraa{\phit}_\tau = \sum_{k=1}^{n+2}(-1)^k\tau(x_k)\phit(x_1,\ldots,\xh_k,\ldots,x_{n+2}).
  \end{align*}
  Expanding the bracket $\phit$, there will be, for every choice
  of integers $k<l$, two terms which are proportional
  to $\tau(x_k)\tau(x_l)$. Their sum becomes
  \begin{align*}
    \tau(x_k)\tau(x_l)\phi(x_1,\ldots,\xh_k,\ldots,\xh_l,\ldots,x_{n+2})
    \parab{(-1)^{k+l}+(-1)^{k+l-1}} = 0.
  \end{align*}
  Hence, $(\phit)_\tau(x_1,\ldots,x_{n+2})=0$ for all
  $x_1,\ldots,x_{n+2}\in V$.
\end{proof}

\noindent Note that one might choose a different $\phit$-trace when repeating
the construction, and this can lead to non-abelian algebras as in the
following example.
\begin{example}
  Let us use an example from \cite{ams:ternary}, where one starts with
  a Hom-Lie algebra defined on a vector space
  $V=\spanvec(x_1,x_2,x_3,x_4)$ via
  \begin{align*}
    [x_i,x_j] = a_{ij}x_3+b_{ij}x_4
  \end{align*}
  and $\alpha_1(x_i) = x_3$ for $i=1,\ldots,4$. This will be a Hom-Lie
  algebra provided $a_{ij},b_{ij}$ satisfy certain conditions; let us
  for definiteness choose
  \begin{align*}
    (b_{ij})=
    \begin{pmatrix}
      0    & b & b & b+c \\
      -b   & 0 & 0 & c \\
      -b   & 0 & 0 & c \\
      -b-c & -c & -c & 0
    \end{pmatrix}
  \end{align*}
  together with $a_{i j}=1$ for all $i<j$. To construct a
  $3$-Hom-Nambu-Lie algebra we set $\tau(x_3)=\tau(x_4)=0$,
  $\tau(x_1)=\tau(x_2)=1$ and $\alpha_2(x_i)=x_4$ for
  $i=1,\ldots,4$. One can easily check that $\tau$ is a $\phi$-trace
  and that the compatibility conditions are fulfilled. The induced
  algebra will then have the following brackets:
  \begin{align*}
    &[x_1,x_2,x_3] = -bx_4\\
    &[x_1,x_2,x_4] = -cx_4\\
    &[x_1,x_3,x_4] = x_3+c x_4\\
    &[x_2,x_3,x_4] = x_3+c x_4.
  \end{align*}
  Now, we want to continue this process and find another trace $\rho$
  together with a linear map $\alpha_3$ such that the resulting
  $4$-Hom-Nambu-Lie algebra is non-abelian. By choosing
  $\rho(x_3)=\rho(x_4)=0$, $\rho(x_1)=\delta_1$, $\rho(x_2)=\delta_2$
  and $\alpha_3=\alpha_1$, one sees that $\rho$ is a trace and that
  the compatibility conditions are fulfilled. The induced
  $4$-Hom-Nambu-Lie algebra has only one independent bracket,
  namely
  \begin{align*}
    [x_1,x_2,x_3,x_4] = (\delta_2-\delta_1)(x_3+cx_4),
  \end{align*}
  which is non-zero for $\delta_1\neq\delta_2$.

\end{example}

\section{The compatibility conditions}\label{sec:compatible}

\noindent Given an $n$-Hom-Nambu-Lie algebra we ask the question: Can
we find a trace and a linear map such that a $(n+1)$-Hom-Nambu-Lie
algebra can be induced? In the following we shall study the
implications of the assumptions in Theorem
\ref{thm:inducedhomnambulie}; it turns out that the relation between
the kernel of $\tau$ and the range of $\alpha_i$ is important.

\begin{definition}\label{def:compatible}
  Let $V$ be a vector space, $\alpha_1,\ldots,\alpha_n$ linear maps
  $V\to V$ and $\tau$ a linear map $V\to\K$. The tuple
  $\alphatautuple$ is \emph{compatible on $V$} if
  \begin{align}
    &\tau\paraa{\alpha_i(x)}\tau(y)=\tau(x)\tau\paraa{\alpha_i(y)}\\
    &\tau\paraa{\alpha_i(x)}\alpha_j(y)=\alpha_i(x)\tau\paraa{\alpha_j(y)}
  \end{align}
  for all $x,y\in V$ and $i,j\in\{1,\ldots,n\}$. A compatible tuple is
  \emph{nondegenerate} if $\ker(\tau)\neq V$ and
  $\ker(\tau)\neq\{0\}$.
\end{definition}

\noindent We introduce $K=\ker(\tau)$ and $U=V\backslash K$. Note that
for a nondegenerate compatible tuple $U$ is always non-empty and $K$
contains at least one non-zero element.

\begin{lemma}\label{lemma:alphaKernel}
  If $\alphatautuple$ is a nondegenerate compatible tuple on $V$ then
  $\alpha_i(K)\subseteq K$ for $i=1,\ldots,n$.
\end{lemma}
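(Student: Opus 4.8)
The plan is to extract everything from the first compatibility relation
\[
  \tau\paraa{\alpha_i(x)}\tau(y)=\tau(x)\tau\paraa{\alpha_i(y)},
\]
which is the only one of the two conditions I expect to need here. Fix $i\in\{1,\ldots,n\}$ and let $x\in K$, so that $\tau(x)=0$. Substituting this $x$ into the relation above, the right-hand side vanishes for every $y\in V$, leaving
\[
  \tau\paraa{\alpha_i(x)}\,\tau(y)=0\qquad\text{for all }y\in V.
\]

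Now I would invoke nondegeneracy. Since the tuple is nondegenerate we have $\ker(\tau)\neq V$, so there exists some $y_0\in V$ with $\tau(y_0)\neq 0$. Plugging $y=y_0$ into the displayed identity and dividing by the nonzero scalar $\tau(y_0)\in\K$ (using that $\K$ is a field) gives $\tau\paraa{\alpha_i(x)}=0$, i.e. $\alpha_i(x)\in K$. As $x\in K$ and $i$ were arbitrary, this proves $\alpha_i(K)\subseteq K$ for $i=1,\ldots,n$.

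There is essentially no obstacle: the argument is a one-line consequence of the first compatibility relation together with the existence of an element outside $\ker(\tau)$. It is worth noting that the second compatibility condition and the other half of nondegeneracy (the requirement $\ker(\tau)\neq\{0\}$) play no role in this particular statement; they will presumably be needed for the finer structural results about the mutual position of $K$ and the ranges of the $\alpha_i$ that follow.
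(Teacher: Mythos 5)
Your argument is correct and is essentially identical to the paper's proof: both apply the first compatibility relation to $x\in K$ and an element $y$ with $\tau(y)\neq 0$ (guaranteed by $\ker(\tau)\neq V$), then divide by $\tau(y)$ to conclude $\tau\paraa{\alpha_i(x)}=0$. Your side remark that only this half of nondegeneracy and only the first relation are used is also accurate.
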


\begin{proof}
  Let $x$ be an arbitrary element of $K$. Since the tuple is assumed
  to nondegenerate, there exists a non-zero element $y\in
  U$. Relation (\ref{eq:tautaualpha}) applied to $x$ and $y$ gives
  \begin{align*}
    \tau(y)\tau\paraa{\alpha_i(x)} = 0,
  \end{align*}
  and since $\tau(y)\neq 0$ this implies that
  $\tau\paraa{\alpha_i(x)}=0$.
\end{proof}

\begin{lemma}\label{lemma:existsu}
  Let $\alphatautuple$ be a nondegenerate compatible tuple on $V$ and
  assume that there exists an element $u\in U$ such that $\alpha(u)\in
  K$. Then $\alpha(V)\subseteq K$.
\end{lemma}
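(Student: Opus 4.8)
The plan is to use the compatibility relation \eqref{eq:taualphaalpha}, which in its cleanest form says that for any $x,y \in V$ the vectors $\alpha_i(x)$ and $\alpha_j(y)$ become proportional after scaling by the scalars $\tau(\alpha_j(y))$ and $\tau(\alpha_i(x))$ respectively. The statement $\alpha(u)\in K$ means $\tau(\alpha(u)) = 0$, and the conclusion $\alpha(V)\subseteq K$ means $\tau(\alpha(v)) = 0$ for all $v\in V$. So I would first reduce the statement to: $\tau\paraa{\alpha(v)} = 0$ for every $v\in V$, using that $\alpha$ here denotes one of the twisting maps $\alpha_i$ (taking $i = j$ in the compatible tuple, or rather fixing the single map $\alpha = \alpha_i$ under consideration).

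First I would pick the given $u\in U$, so $\tau(u)\neq 0$, together with the hypothesis $\tau\paraa{\alpha(u)}=0$. For an arbitrary $v\in V$, I apply \eqref{eq:taualphaalpha} with the roles $x = v$, $y = u$ and $i = j = $ the index of $\alpha$: this yields
\begin{align*}
  \tau\paraa{\alpha(v)}\,\alpha(u) = \alpha(v)\,\tau\paraa{\alpha(u)} = 0,
\end{align*}
since $\tau\paraa{\alpha(u)} = 0$. Now I would split into two cases according to whether $\alpha(u)$ is the zero vector. If $\alpha(u)\neq 0$, then $\tau\paraa{\alpha(v)}\alpha(u) = 0$ forces the scalar $\tau\paraa{\alpha(v)}$ to vanish, and since $v$ was arbitrary we conclude $\alpha(V)\subseteq K$, as desired.

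The remaining case $\alpha(u) = 0$ is the one subtle point, and I expect it to be the main (though minor) obstacle: here the displayed identity is vacuous and gives no information about $\tau\paraa{\alpha(v)}$ directly. To handle it I would instead use \eqref{eq:taualphaalpha} in the form $\tau\paraa{\alpha(v)}\alpha(u) = \alpha(v)\tau\paraa{\alpha(u)}$ together with a different auxiliary element, or better, apply \eqref{eq:tautaualpha} with $x = v$, $y = u$: this gives $\tau\paraa{\alpha(v)}\tau(u) = \tau(v)\tau\paraa{\alpha(u)} = 0$, and since $\tau(u)\neq 0$ we again get $\tau\paraa{\alpha(v)} = 0$ for all $v\in V$. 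In fact this last argument via \eqref{eq:tautaualpha} works uniformly, without splitting into cases at all, so I would likely present only that: apply \eqref{eq:tautaualpha} to the pair $(v,u)$, use $\tau\paraa{\alpha(u)} = 0$ and $\tau(u)\neq 0$, and conclude $\tau\paraa{\alpha(v)} = 0$, i.e. $\alpha(v)\in K$, for every $v\in V$.
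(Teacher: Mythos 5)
Your final argument---apply relation \eqref{eq:tautaualpha} to the pair $(v,u)$, use $\tau\paraa{\alpha(u)}=0$ and $\tau(u)\neq 0$ to conclude $\tau\paraa{\alpha(v)}=0$ for all $v$---is exactly the paper's proof, and you are right that it works uniformly with no case split. The preliminary detour through \eqref{eq:taualphaalpha} is unnecessary but does no harm since you correctly discard it in favour of the clean argument.
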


\begin{proof}
  Let $x$ be an arbitrary element of $V$. Relation
  (\ref{eq:tautaualpha}) applied to $x$ and $u$ gives
  \begin{align*}
    \tau(u)\tau\paraa{\alpha_i(x)} = 0,
  \end{align*}
  and since $\tau(u)\neq 0$ it follows that $\tau\paraa{\alpha_i(x)}=0$.
\end{proof}

\noindent Hence, for a nondegenerate compatible tuple it either holds
that $\alpha_i(V)\subseteq K$ or $\alpha_i(U)\subseteq U$.

\begin{proposition}\label{prop:imageUU}
  Let $\alphatautuple$ be a nondegenerate compatible tuple on $V$ and
  assume that there exist $i,j\in\{1,\ldots,n\}$ such that
  $\alpha_i(U)\subseteq U$ and $\alpha_j(U)\subseteq U$. Then there
  exists $\lambda_{ij}\in\K\backslash\{0\}$ such that
  $\alpha_i=\lambda_{ij}\alpha_j$, where
  $\lambda_{ij}=\tau\paraa{\alpha_i(u)}/\tau\paraa{\alpha_j(u)}$ for
  any $u\in U$.
\end{proposition}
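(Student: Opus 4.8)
The plan is to exploit the second compatibility relation $\tau(\alpha_i(x))\alpha_j(y)=\alpha_i(x)\tau(\alpha_j(y))$ directly. First I would fix an arbitrary $u\in U$; since $\alpha_j(U)\subseteq U$ we have $\tau(\alpha_j(u))\neq 0$, so the scalar $\lambda_{ij}:=\tau(\alpha_i(u))/\tau(\alpha_j(u))$ is well-defined, and it is nonzero because $\alpha_i(U)\subseteq U$ forces $\tau(\alpha_i(u))\neq 0$ as well. Applying the second relation with this fixed $u$ in the first slot and an arbitrary $x\in V$ in the second slot gives
\begin{align*}
  \tau\paraa{\alpha_i(u)}\alpha_j(x)=\alpha_i(u)\,\tau\paraa{\alpha_j(x)},
\end{align*}
and symmetrically, swapping the roles of $i$ and $j$,
\begin{align*}
  \tau\paraa{\alpha_j(u)}\alpha_i(x)=\alpha_j(u)\,\tau\paraa{\alpha_i(x)}.
\end{align*}

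Next I would extract $\tau(\alpha_j(x))$ and $\tau(\alpha_i(x))$ and compare. From the first compatibility relation (Definition~\ref{def:compatible}, first line), applied to $x$ and $u$, one gets $\tau(\alpha_i(x))\tau(u)=\tau(x)\tau(\alpha_i(u))$ and similarly for $j$; dividing by $\tau(u)\neq 0$ expresses $\tau(\alpha_i(x))$ and $\tau(\alpha_j(x))$ as scalar multiples of $\tau(x)$, and in particular
\begin{align*}
  \tau\paraa{\alpha_i(x)}=\lambda_{ij}\,\tau\paraa{\alpha_j(x)}
\end{align*}
for all $x$. Substituting this into the displayed relation $\tau(\alpha_j(u))\alpha_i(x)=\alpha_j(u)\tau(\alpha_i(x))$ yields $\tau(\alpha_j(u))\alpha_i(x)=\lambda_{ij}\,\alpha_j(u)\,\tau(\alpha_j(x))$, while the relation with $i$ and $u$ swapped in reads $\tau(\alpha_i(u))\alpha_j(x)=\alpha_i(u)\tau(\alpha_j(x))$, i.e. $\lambda_{ij}\tau(\alpha_j(u))\alpha_j(x)=\alpha_i(u)\tau(\alpha_j(x))$. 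Here one has to be slightly careful: the vectors $\alpha_i(u)$ and $\alpha_j(u)$ need not be equal, so I cannot naively cancel them. The cleanest route is to instead use $\tau(\alpha_i(u))\alpha_j(x)=\alpha_i(u)\tau(\alpha_j(x))$ together with $\tau(\alpha_j(u))\alpha_i(x)=\alpha_j(u)\tau(\alpha_j(x))/\lambda_{ij}\cdot\lambda_{ij}$—more transparently, apply the second compatibility relation once more with the first argument $u$ and second argument $x$ but now reading off $\alpha_i(u)\tau(\alpha_j(x))=\tau(\alpha_i(u))\alpha_j(x)$ and $\alpha_j(u)\tau(\alpha_i(x))=\tau(\alpha_j(u))\alpha_i(x)$, divide the former by $\tau(\alpha_j(u))$ and the latter by $\tau(\alpha_i(u))$, and note both left-hand sides then become $\frac{\alpha_i(u)}{\tau(\alpha_j(u))}\tau(\alpha_j(x))$ and $\frac{\alpha_j(u)}{\tau(\alpha_i(u))}\tau(\alpha_i(x))=\frac{\alpha_j(u)}{\tau(\alpha_i(u))}\lambda_{ij}\tau(\alpha_j(x))$. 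Taking $x=u$ in the relation $\tau(\alpha_i(u))\alpha_j(x)=\alpha_i(u)\tau(\alpha_j(x))$ gives $\tau(\alpha_i(u))\alpha_j(u)=\tau(\alpha_j(u))\alpha_i(u)$, that is $\alpha_i(u)=\lambda_{ij}\alpha_j(u)$; substituting this back into $\tau(\alpha_i(u))\alpha_j(x)=\alpha_i(u)\tau(\alpha_j(x))$ and dividing by $\tau(\alpha_i(u))\neq 0$ produces $\alpha_j(x)=\alpha_j(u)\tau(\alpha_j(x))/\tau(\alpha_j(u))$, and likewise $\alpha_i(x)=\alpha_i(u)\tau(\alpha_i(x))/\tau(\alpha_i(u))=\lambda_{ij}\alpha_j(u)\cdot\tau(\alpha_i(x))/\tau(\alpha_i(u))=\lambda_{ij}\alpha_j(u)\tau(\alpha_j(x))/\tau(\alpha_j(u))=\lambda_{ij}\alpha_j(x)$, as desired.

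I would then observe that $\lambda_{ij}$ is independent of the choice of $u\in U$: if $u'\in U$ is another element, the same computation gives $\alpha_i=\lambda_{ij}'\alpha_j$ with $\lambda_{ij}'=\tau(\alpha_i(u'))/\tau(\alpha_j(u'))$, and since $\alpha_j(u)\neq 0$ (because $\alpha_j(u)\in U$) the two scalar identities $\lambda_{ij}\alpha_j=\lambda_{ij}'\alpha_j$ force $\lambda_{ij}=\lambda_{ij}'$; alternatively one reads it off immediately from $\alpha_i(u)=\lambda_{ij}\alpha_j(u)$ applied to $u'$. The main obstacle, as flagged above, is purely bookkeeping: one must resist the temptation to cancel the vectors $\alpha_i(u)$ and $\alpha_j(u)$ before having first established, via $x=u$, that $\alpha_i(u)$ and $\alpha_j(u)$ are already proportional with the right constant; once that single identity is in hand, everything collapses to a one-line division by the nonzero scalar $\tau(\alpha_i(u))$. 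All three hypotheses are used exactly once each: nondegeneracy to guarantee $U\neq\emptyset$, $\alpha_i(U)\subseteq U$ to make $\tau(\alpha_i(u))\neq 0$, and the two compatibility relations to relate traces and to transport the proportionality from $u$ to arbitrary $x$.
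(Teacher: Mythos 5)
Your proof is correct, and it in fact does more than the paper's own argument. The paper's proof is a single line: it asserts that with $u\in U$ and $x\in V$ relation (\ref{eq:taualphaalpha}) ``becomes'' $\tau\paraa{\alpha_i(u)}\alpha_j(x)=\tau\paraa{\alpha_j(u)}\alpha_i(x)$ and then divides by the nonzero scalars. But a literal instantiation of (\ref{eq:taualphaalpha}) at $(u,x)$ gives $\tau\paraa{\alpha_i(u)}\alpha_j(x)=\alpha_i(u)\tau\paraa{\alpha_j(x)}$, with the \emph{vector} $\alpha_i(u)$ on the right-hand side --- exactly the point you flag when you warn against cancelling $\alpha_i(u)$ against $\alpha_j(u)$ prematurely. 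Passing from this to the paper's displayed identity requires precisely the two auxiliary facts you establish: $\alpha_i(u)=\lambda_{ij}\alpha_j(u)$ (the $x=u$ specialization of the same relation) and $\tau\paraa{\alpha_i(x)}=\lambda_{ij}\tau\paraa{\alpha_j(x)}$ (which uses relation (\ref{eq:tautaualpha}) and $\tau(u)\neq 0$). So your argument follows the same strategy as the paper --- specialize the second compatibility relation at a fixed $u\in U$ and divide by nonzero traces --- but carries out the bookkeeping that the paper suppresses; the middle of your write-up meanders before settling on the clean route, but the final chain of equalities is valid, and your closing observation that $\lambda_{ij}$ is independent of the choice of $u$ (which the statement implicitly asserts) is a worthwhile addition that the paper's proof omits.
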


\begin{proof}
  With $u\in U$ and $x\in V$ equation (\ref{eq:taualphaalpha}) becomes
  \begin{align*}
    \tau\paraa{\alpha_i(u)}\alpha_j(x) = \tau\paraa{\alpha_j(u)}\alpha_i(x).
  \end{align*}
  By assumption, $\tau\paraa{\alpha_i(u)}\neq 0$ and
  $\tau\paraa{\alpha_j(u)}\neq 0$, which implies that
  \begin{align*}
    \alpha_i(x) =
    \frac{\tau\paraa{\alpha_i(u)}}{\tau\paraa{\alpha_j(u)}}\alpha_j(x),
  \end{align*}
  which proves the statement.
\end{proof}

\begin{proposition}\label{prop:imageUK}
  Let $\alphatautuple$ be a nondegenerate compatible tuple on $V$ and
  assume there exist $i,j\in\{1,\ldots,n\}$ such that
  $\alpha_i(U)\subseteq U$ and $\alpha_j(U)\subseteq K$. Then
  $\alpha_j(x)=0$ for all $x\in V$.
\end{proposition}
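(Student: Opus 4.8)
The plan is to combine the two branches of the dichotomy recorded right after Lemma \ref{lemma:existsu}: each $\alpha_i$ either sends all of $V$ into $K$ or sends $U$ into $U$. In the situation at hand $\alpha_j$ is of the first type and $\alpha_i$ of the second, and the claim is that being of the ``first type'' is in fact degenerate, i.e. forces $\alpha_j\equiv 0$.

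First I would pin down the image of $\alpha_j$. Since the tuple is nondegenerate, $U\neq\emptyset$; fix some $u\in U$. By hypothesis $\alpha_j(u)\in K$, so Lemma \ref{lemma:existsu}, applied to the map $\alpha_j$, gives $\alpha_j(V)\subseteq K$, that is, $\tau\paraa{\alpha_j(y)}=0$ for all $y\in V$. (Alternatively one may argue: $\alpha_j(K)\subseteq K$ by Lemma \ref{lemma:alphaKernel}, while $\alpha_j(U)\subseteq K$ is the hypothesis, hence $\alpha_j(V)\subseteq K$ because $V=K\cup U$.)

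Next I would use the second compatibility relation with the same $u$ serving as a pivot. Since $\alpha_i(U)\subseteq U$, we have $\alpha_i(u)\in U$, so $\tau\paraa{\alpha_i(u)}\neq 0$. Putting $x=u$ and $y$ arbitrary in (\ref{eq:taualphaalpha}) gives
\[
  \tau\paraa{\alpha_i(u)}\,\alpha_j(y)=\alpha_i(u)\,\tau\paraa{\alpha_j(y)}=0
\]
by the previous step; dividing by the nonzero scalar $\tau\paraa{\alpha_i(u)}$ yields $\alpha_j(y)=0$. As $y\in V$ was arbitrary, $\alpha_j=0$ on $V$.

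I do not expect a genuine obstacle here; the argument is short. The only point worth flagging is that relation (\ref{eq:tautaualpha}) by itself is too weak for the conclusion — it only produces $\alpha_j(V)\subseteq K$ — and that the stronger relation (\ref{eq:taualphaalpha}) upgrades this to $\alpha_j=0$ precisely because the hypothesis $\alpha_i(U)\subseteq U$ supplies an element $u$ with $\tau\paraa{\alpha_i(u)}\neq 0$ by which one may divide.
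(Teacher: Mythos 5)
Your proof is correct and follows essentially the same route as the paper: apply relation (\ref{eq:taualphaalpha}) with a pivot $u\in U$ for which $\tau\paraa{\alpha_i(u)}\neq 0$, note that the right-hand side vanishes because $\alpha_j(V)\subseteq K$, and divide. The only difference is that you spell out explicitly (via Lemma \ref{lemma:alphaKernel} or Lemma \ref{lemma:existsu}) why $\tau\paraa{\alpha_j(x)}=0$ for all $x\in V$, a step the paper leaves implicit.
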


\begin{proof}
  Assume that $\alpha_i(U)\subseteq U$ and $\alpha_j(U)\subseteq K$
  and let $u\in U$ and $x\in V$. Equation (\ref{eq:taualphaalpha})
  gives
  \begin{align*}
    \tau\paraa{\alpha_i(u)}\alpha_j(x) = 0,
  \end{align*}
  which implies that $\alpha_j(x)=0$ since $\alpha_i(u)\in U$.
\end{proof}

\noindent The above results tell us that given an $n$-Hom-Nambu-Lie
algebra with twisting maps $\alpha_1,\ldots,\alpha_{n-1}$, and a
$\phi$-trace $\tau$, there is not much choice when choosing
$\alpha_n$. Namely, if there is a twisting map $\alpha_i$ such that
$\alpha_i(U)\subseteq U$ then either $\alpha_n=0$ or $\alpha_n$ is
proportional to $\alpha_i$. In the case when $\alpha_i(U)\subseteq K$
for $i=1,\ldots,n-1$ one has slightly more freedom of choosing
$\alpha_n$, but note that unless $\alpha_n(U)\subseteq K$, Proposition
\ref{prop:imageUK} gives $\alpha_i=0$ for $i=1,\ldots,n-1$. When
$\alpha_i(U)\subseteq K$ for $i=1,\ldots,n$ the compatibility
conditions (\ref{eq:tautaualpha}) and (\ref{eq:taualphaalpha}) are
automatically satisfied.

Hence, there are two potentially interesting cases which give rise to
non-zero twisting maps:
\begin{align}
  &\alpha_i(U)\subseteq U\text{ for }i=1,\ldots,n\tag{C1}\label{eq:C1}\\
  &\alpha_i(U)\subseteq K\text{ for }i=1,\ldots,n.\tag{C2}\label{eq:C2}
\end{align}
In case (\ref{eq:C1}) all the twisting maps will be proportional.

Having considered the case of nondegenerate compatible tuples, let us
show that degenerate ones lead to abelian algebras.

\begin{proposition}\label{prop:degenerateAbelian}
  Let $\A=(V,\phi_\tau,\alpha_1,\ldots,\alpha_n)$ be a Hom-Nambu-Lie
  algebra induced by $(V,\phi,\alpha_1,\ldots,\alpha_{n-1})$ and a
  $\phi$-trace $\tau$. If $\alphatautuple$ is a degenerate compatible
  tuple then $\A$ is abelian.
\end{proposition}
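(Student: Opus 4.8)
The plan is to unpack the word ``degenerate'' and dispose of the two possibilities separately; in each case the induced bracket $\phit$ turns out to vanish identically, which is exactly what it means for the induced $(n+1)$-Hom-Nambu-Lie algebra $\A$ to be abelian. By Definition \ref{def:compatible}, a compatible tuple $\alphatautuple$ fails to be nondegenerate precisely when $\ker(\tau)=V$ or $\ker(\tau)=\{0\}$, so those are the two cases to treat. I would point out at the start that the compatibility conditions themselves are not needed for the conclusion: they enter only through Theorem \ref{thm:inducedhomnambulie}, which is what guarantees that $\A$ is a Hom-Nambu-Lie algebra to begin with.

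First, suppose $\ker(\tau)=V$, i.e.\ $\tau$ is the zero functional. Then Definition \ref{def:phitau} gives
\begin{align*}
  \phit(x_1,\ldots,x_{n+1})=\sum_{k=1}^{n+1}(-1)^k\tau(x_k)\phi(x_1,\ldots,\xh_k,\ldots,x_{n+1})=0
\end{align*}
for all $x_1,\ldots,x_{n+1}\in V$, since every coefficient $\tau(x_k)$ vanishes. Thus the bracket of $\A$ is identically zero.

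Second, suppose $\ker(\tau)=\{0\}$, i.e.\ $\tau$ is injective. Being an injective linear map into the one-dimensional space $\K$, $\tau$ forces $\dim_\K V\le 1$. If $V=\{0\}$ there is nothing to prove; otherwise fix a basis vector $e$ of $V$. By multilinearity of $\phit$ it suffices to evaluate $\phit$ on the tuple $(e,\ldots,e)$, and since $n+1\ge 2$ this tuple has (at least) two equal entries, so total skew-symmetry of $\phit$ (Lemma \ref{lemma:phitlinantisym}, together with $\operatorname{char}\K=0$) forces $\phit(e,\ldots,e)=0$. Hence $\phit\equiv 0$ and again $\A$ is abelian.

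I do not anticipate any real difficulty; the only points requiring care are reading off the two degenerate alternatives from Definition \ref{def:compatible} and the elementary observation that a linear functional can be injective only on a space of dimension at most one. As an alternative in the second case one could argue directly from (\ref{eq:taualphaalpha}), choosing $u\in V$ with $\tau(u)\ne 0$ and extracting constraints on the $\alpha_i$, but the dimension argument is shorter and avoids the subcase analysis of Section \ref{sec:compatible}.
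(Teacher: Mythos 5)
Your proposal is correct, and the first case ($\ker\tau=V$) is word-for-word the paper's argument. In the second case ($\ker\tau=\{0\}$) you take a genuinely different route. The paper exploits the $\phi$-trace property directly: $\tau\paraa{\phi(x_1,\ldots,x_n)}=0$ means $\phi(x_1,\ldots,x_n)\in\ker\tau=\{0\}$, so $\phi\equiv 0$ and hence $\phit\equiv 0$. You instead observe that an injective linear functional forces $\dim_\K V\le 1$ and then kill $\phit$ by multilinearity plus total skew-symmetry in characteristic $0$. Both arguments are complete and short. The paper's version buys slightly more: it shows the \emph{original} bracket $\phi$ is already identically zero (so the hypothesis that $\tau$ is a $\phi$-trace is doing real work), and it makes no use of skew-symmetry or the characteristic of $\K$ beyond what is already assumed. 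Your version buys a structural insight the paper does not state --- that a trace with trivial kernel can only exist on a space of dimension at most one --- and it would apply verbatim to any $(n+1)$-linear alternating map built on such a $V$, whether or not it arises from a $\phi$-trace. Your remark that the compatibility conditions are not actually used in the proof is accurate and matches the paper, which likewise never invokes them here.
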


\begin{proof}
  By the definition of $\phi_\tau$ it is clear that if $\ker\tau=V$
  then $\phi_\tau(x_1,\ldots,x_{n+1})=0$ for all
  $x_1,\ldots,x_{n+1}\in V$. Now, assume that $\ker\tau=\{0\}$. Since
  $\tau$ is a $\phi$-trace one has that
  $\tau\paraa{\phi(x_1,\ldots,x_n)}=0$ for all $x_1,\ldots,x_n\in
  V$. Hence, $\phi(x_1,\ldots,x_n)$ is in $\ker\tau$ which implies
  that $\phi(x_1,\ldots,x_{n})=0$. From this it immediately follows
  that $\phi_\tau(x_1,\ldots,x_{n+1})=0$ for all $x_1,\ldots,x_{n+1}\in
  V$.
\end{proof}

\section{Twisting of $n$-ary Hom-Nambu-Lie algebras}\label{sec:twisting}

\noindent In \cite{y:nhomliegoingdown} the general property that an $n$-Lie algebra induces a $(n-k)$-Lie algebra by fixing $k$ elements
in the bracket, was extended to Hom-Nambu-Lie algebras.

\begin{definition}\label{dec:interior}
  Let $\phi:V^n\to V$ be a linear map and let $a_1,\ldots,a_{k}$ (with
  $k<n$) be elements of $V$. By $\pi_{a_1\cdots a_k}\phi$ we denote the
  map $V^{n-k}\to V$ defined by
  \begin{align}
    \paraa{\pi_{a_1\cdots a_k}\phi}(x_1,\ldots,x_{n-k}) = \phi(x_1,\ldots,x_{n-k},a_1,\ldots,a_k).
  \end{align}
\end{definition}

\noindent The result in \cite{y:nhomliegoingdown} can now be stated as

\begin{proposition}\label{prop:yaugoingdown}
  Let $(V,\phi,\alpha_1,\ldots,\alpha_{n-1})$ be an $n$-Hom-Nambu-Lie
  algebra and let $a_1,\ldots,a_k\in V$ (with $k<n$) be elements such
  that $\alpha_{n-k-1+i}(a_{i})=a_{i}$ for $i=1,\ldots,k$. Then
  $(V,\pi_{a_1\cdots a_k}\phi,\alpha_{1},\ldots,\alpha_{n-k-1})$ is a
  $(n-k)$-Hom-Nambu-Lie algebra.
\end{proposition}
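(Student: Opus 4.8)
The plan is to verify directly that $(V,\pi_{a_1\cdots a_k}\phi,\alpha_1,\ldots,\alpha_{n-k-1})$ satisfies the definition of an $(n-k)$-Hom-Nambu-Lie algebra. Skew-symmetry and multilinearity of $\psi := \pi_{a_1\cdots a_k}\phi$ in its $n-k$ arguments are immediate, since these arguments are simply a subset of the arguments of the totally skew-symmetric multilinear map $\phi$. So the only thing requiring work is the Hom-Nambu-Jacobi identity for $\psi$ with twisting maps $\alpha_1,\ldots,\alpha_{n-k-2}$ appearing on the $y$-side and the bracket identity acting with arguments $x_1,\ldots,x_{n-k-1}$.

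First I would write out the Hom-Nambu-Jacobi identity for the original algebra $(V,\phi,\alpha_1,\ldots,\alpha_{n-1})$, but evaluated on the specific elements obtained by appending $a_1,\ldots,a_k$ to fill the last $k$ slots everywhere they are needed. Concretely, in the defining identity for $\phi$ one takes the $x$-arguments to be $x_1,\ldots,x_{n-k-1},a_1,\ldots,a_k$ (so that $\phi(x_1,\ldots,x_{n-1},\,\cdot\,) = \phi(x_1,\ldots,x_{n-k-1},\,\cdot\,,a_1,\ldots,a_k)$, which is exactly $\psi(x_1,\ldots,x_{n-k-1},\,\cdot\,)$ after using total skew-symmetry to move the $a_i$ to the end), and the $y$-arguments to be $y_1,\ldots,y_{n-k},a_1,\ldots,a_k$. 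The key computation is then tracking how the twisting maps act on the appended slots: on the left-hand side the innermost bracket $[y_1,\ldots,y_n]$ becomes $\phi(y_1,\ldots,y_{n-k},a_1,\ldots,a_k) = \psi(y_1,\ldots,y_{n-k})$, and the outer $\alpha_i$'s act on $y_1,\ldots,y_{n-k-1}$ as $\alpha_1,\ldots,\alpha_{n-k-2}$ should, while the slots holding $a_1,\ldots,a_k$ get hit by $\alpha_{n-k-1+i-1}$... here I must be careful with indexing, and this is where the hypothesis $\alpha_{n-k-1+i}(a_i)=a_i$ enters: it guarantees that every $\alpha$ landing on an appended $a_i$ leaves it fixed, so those slots still read $a_i$ and can be recollected into $\psi$.

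On the right-hand side the sum over $k=1,\ldots,n$ splits into the terms where the replaced argument $y_k$ is one of the genuine arguments $y_1,\ldots,y_{n-k}$ and the terms where it is one of the appended $a_i$'s. For the first group of terms, using $\alpha_{n-k-1+i}(a_i)=a_i$ repeatedly one rewrites each summand as $\psi(\ldots,\psi(x_1,\ldots,x_{n-k-1},y_j),\ldots)$ with the correct $\alpha_1,\ldots,\alpha_{n-k-2}$ decorating the remaining $y$'s — i.e. exactly the terms of the Hom-Nambu-Jacobi identity for $\psi$. The terms in the second group, where an $a_i$ gets replaced by $[x_1,\ldots,x_{n-k-1},a_1,\ldots,a_k,a_i]$, vanish: that bracket has two repeated arguments $a_i$ (one from the inserted block $a_1,\ldots,a_k$ and one from $y_k=a_i$) — wait, more precisely the inner bracket is $\phi$ evaluated with $a_i$ appearing both among the fixed block and as the substituted slot, so by total skew-symmetry of $\phi$ it is zero. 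So the main obstacle, and the part demanding genuine care rather than routine manipulation, is the bookkeeping of which twisting map hits which slot after the substitution and making sure the index shift $\alpha_{n-k-1+i}$ in the hypothesis is exactly what is needed to fix all appended slots under the outer maps in every term of the expansion; once that bookkeeping is set up correctly, the identity for $\psi$ falls out of the identity for $\phi$ by the vanishing-by-repetition observation. I would close by noting this recovers, for $\alpha_i=\id_V$, the classical fact that fixing $k$ elements in an $n$-Lie algebra bracket yields an $(n-k)$-Lie algebra.
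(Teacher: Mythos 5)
The paper does not actually prove this proposition --- it is quoted from Yau's work \cite{y:nhomliegoingdown} and used as a black box --- so there is no in-house argument to compare against; your direct verification is the natural one and its strategy is sound. The three ingredients you identify are exactly right: substitute $x_{n-k-1+i}=a_i$ and $y_{n-k+i}=a_i$ into the fundamental identity for $\phi$, use the fixed-point hypothesis to strip the twisting maps off the appended slots, and kill the terms where the substituted $y$-slot is itself some $a_i$ by skew-symmetry of the inner bracket (repeated argument). The one place you defer (``I must be careful with indexing'') does work out, but deserves to be written down since it is the only spot the argument could silently fail: with $\psi=\pi_{a_1\cdots a_k}\phi$, the inner bracket on the right becomes $\phi(x_1,\ldots,x_{n-k-1},a_1,\ldots,a_k,y_j)=(-1)^k\psi(x_1,\ldots,x_{n-k-1},y_j)$, while on the left $\phi\paraa{\ldots,a_1,\ldots,a_k,\psi(y_1,\ldots,y_{n-k})}=(-1)^k\psi\paraa{\ldots,\psi(y_1,\ldots,y_{n-k})}$; the same factor $(-1)^k$ appears on both sides and cancels. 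Likewise the index check: on the left the $a_i$ sits in $x$-slot $n-k-1+i$ and is hit by $\alpha_{n-k-1+i}$, while on the right it sits in $y$-slot $n-k+i$ after the inserted bracket and is hit by $\alpha_{n-k+i-1}=\alpha_{n-k-1+i}$, so the single hypothesis $\alpha_{n-k-1+i}(a_i)=a_i$ covers both occurrences --- confirming your guess that the index shift in the hypothesis is tailored to the substitution. Two small slips to correct in a final write-up: the reduced $(n-k)$-ary algebra carries the $n-k-1$ twisting maps $\alpha_1,\ldots,\alpha_{n-k-1}$, not $\alpha_1,\ldots,\alpha_{n-k-2}$, and on the left-hand side of the identity it is the $x$-arguments, not the $y$-arguments, that carry the outer twisting maps.
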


\noindent Hence, given an $n$-Hom-Nambu-Lie algebra one can create a ``twisted''
$n$-Hom-Nambu-Lie algebra by applying Proposition
\ref{prop:yaugoingdown} and Theorem \ref{thm:inducedhomnambulie}.

\begin{proposition}\label{prop:updown}
  Let $(V,\phi,\alpha_1,\ldots,\alpha_{n-1})$ be an $n$-Hom-Nambu-Lie
  algebra, $\tau$ a $\phi$-trace and $\alpha_n:V\to V$ a linear map
  such that equations (\ref{eq:tautaualpha}) and
  (\ref{eq:taualphaalpha}) are fulfilled. If there exists an element
  $a\in V$ such that $\alpha_n(a)=a$ then
  $(V,\pi_a\phit,\alpha_1,\ldots,\alpha_{n-1})$ is an $n$-Hom-Nambu-Lie
  algebra with
  \begin{align*}
    \paraa{\pi_a\phit}(x_1,\ldots,x_n) =
    \sum_{k=1}^{n}&(-1)^k\tau(x_k)\phi(x_1,\ldots,\xh_k,\ldots,x_{n},a)\\
    &+(-1)^{n+1}\tau(a)\phi(x_1,\ldots,x_n).
  \end{align*}
\end{proposition}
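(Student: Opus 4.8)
The plan is to combine the two prior results in the most direct way: first apply Theorem \ref{thm:inducedhomnambulie} to obtain the $(n+1)$-Hom-Nambu-Lie algebra $(V,\phit,\alpha_1,\ldots,\alpha_n)$, and then apply Proposition \ref{prop:yaugoingdown} with $k=1$ to descend back to an $n$-ary structure by fixing the single element $a$. To invoke Proposition \ref{prop:yaugoingdown} in the case $k=1$ on the algebra $(V,\phit,\alpha_1,\ldots,\alpha_n)$, the hypothesis required is that $\alpha_{(n+1)-1-1+1}(a)=\alpha_{n}(a)=a$, which is exactly the assumption we are given. The proposition then yields that $(V,\pi_a\phit,\alpha_1,\ldots,\alpha_{n-1})$ is an $n$-Hom-Nambu-Lie algebra, which is precisely the claimed conclusion (note that the surviving twisting maps are $\alpha_1,\ldots,\alpha_{(n+1)-1-1}=\alpha_1,\ldots,\alpha_{n-1}$, matching the statement).

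The remaining content is then purely computational: I must check that the displayed formula for $\pi_a\phit$ is correct. First I would write, by Definition \ref{dec:interior} with $k=1$,
\begin{align*}
  \paraa{\pi_a\phit}(x_1,\ldots,x_n) = \phit(x_1,\ldots,x_n,a).
\end{align*}
Then I would expand the right-hand side using Definition \ref{def:phitau} for $\phit$ applied to the $(n+1)$-tuple $(x_1,\ldots,x_n,a)$: the sum over $k$ from $1$ to $n+1$ splits into the terms $k=1,\ldots,n$, where the excluded argument is $x_k$ and $a$ remains in the last slot of $\phi$, giving $\sum_{k=1}^n(-1)^k\tau(x_k)\phi(x_1,\ldots,\xh_k,\ldots,x_n,a)$; plus the term $k=n+1$, where the excluded argument is $a$ itself, giving $(-1)^{n+1}\tau(a)\phi(x_1,\ldots,x_n)$. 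Adding these reproduces the displayed expression, completing the verification.

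The only subtlety worth flagging is one of bookkeeping rather than substance: one must be careful that the indexing of the twisting maps is consistent when passing from the $(n+1)$-ary algebra down to the $n$-ary one, i.e. that fixing the last argument $a$ (which is the $(n+1)$-th slot, acted on in the Hom-Nambu-Jacobi identity by $\alpha_n$) indeed satisfies the fixed-point condition of Proposition \ref{prop:yaugoingdown} and removes exactly $\alpha_n$ from the list. Since the condition there is $\alpha_{n-k-1+i}(a_i)=a_i$, with $n$ replaced by $n+1$, $k=1$, $i=1$, this reads $\alpha_n(a)=a$, so everything lines up. I do not anticipate any genuine obstacle here; the proposition is an essentially immediate corollary of the two results already established, and the "hard part" — verifying the Hom-Nambu-Jacobi identity for the descended bracket — has already been done inside Proposition \ref{prop:yaugoingdown}.
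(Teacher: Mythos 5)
Your proposal is correct and follows exactly the route the paper intends: the text immediately preceding Proposition \ref{prop:updown} states that the result is obtained by combining Proposition \ref{prop:yaugoingdown} (with $k=1$, applied to the induced $(n+1)$-ary algebra, where the fixed-point condition indeed reduces to $\alpha_n(a)=a$) with Theorem \ref{thm:inducedhomnambulie}, and your expansion of $\phit(x_1,\ldots,x_n,a)$ into the two displayed groups of terms is the right bookkeeping. No gaps.
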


\noindent One can also go the other way around: first applying $\pi$
and then inducing a higher order algebra.

\begin{proposition}\label{prop:downup}
  Let $(V,\phi,\alpha_1,\ldots,\alpha_{n-1})$ be an $n$-Hom-Nambu-Lie
  algebra and $\tau$ a $\pi_a\phi$-trace such that equations
  (\ref{eq:tautaualpha}) and (\ref{eq:taualphaalpha}) are
  fulfilled. If there exists an element $a\in V$ such that
  $\alpha_{n-1}(a)=a$ then it holds that
  $(V,(\pi_a\phi)_\tau,\alpha_1,\ldots,\alpha_{n-1})$ is a
  $n$-Hom-Nambu-Lie algebra with
  \begin{align*}
    \paraa{\pi_a\phi}_\tau(x_1,\ldots,x_n) =
    \sum_{k=1}^{n}(-1)^k\tau(x_k)\phi(x_1,\ldots,\xh_k,\ldots,x_{n},a).
  \end{align*}
\end{proposition}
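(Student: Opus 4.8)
The plan is to obtain this result by composing two statements that are already available: Proposition \ref{prop:yaugoingdown}, which passes from the $n$-ary bracket $\phi$ to the $(n-1)$-ary bracket $\pi_a\phi$, and Theorem \ref{thm:inducedhomnambulie}, which induces an $n$-ary bracket back from an $(n-1)$-ary one. No new computation is needed; the point is to check that the hypotheses line up.

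First I would apply Proposition \ref{prop:yaugoingdown} with $k=1$ and the single fixed element $a_1=a$. In that case the condition $\alpha_{n-k-1+i}(a_i)=a_i$ reduces to $\alpha_{n-1}(a)=a$, which is precisely what we assume. Hence $(V,\pi_a\phi,\alpha_1,\ldots,\alpha_{n-2})$ is an $(n-1)$-Hom-Nambu-Lie algebra; in particular $\pi_a\phi$ is a totally skew-symmetric $(n-1)$-linear map, as will be needed below.

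Next I would invoke Theorem \ref{thm:inducedhomnambulie} applied to the $(n-1)$-Hom-Nambu-Lie algebra $(V,\pi_a\phi,\alpha_1,\ldots,\alpha_{n-2})$, using $\tau$ as the trace and $\alpha_{n-1}$ in the role of the extra linear map (the ``$\alpha_n$'' of that theorem, with the arity shifted down by one). By hypothesis $\tau$ is a $\pi_a\phi$-trace, and the compatibility relations (\ref{eq:tautaualpha}) and (\ref{eq:taualphaalpha}) hold for all $i,j\in\{1,\ldots,n-1\}$, which are exactly the conditions the theorem demands at arity $n-1$. The theorem therefore yields that $(V,(\pi_a\phi)_\tau,\alpha_1,\ldots,\alpha_{n-1})$ is an $n$-Hom-Nambu-Lie algebra.

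Finally, the explicit formula is obtained by substitution: Definition \ref{def:phitau}, applied to the $(n-1)$-linear map $\pi_a\phi$, gives
\begin{align*}
  \paraa{\pi_a\phi}_\tau(x_1,\ldots,x_n)
  &= \sum_{k=1}^{n}(-1)^k\tau(x_k)\paraa{\pi_a\phi}(x_1,\ldots,\xh_k,\ldots,x_n)\\
  &= \sum_{k=1}^{n}(-1)^k\tau(x_k)\phi(x_1,\ldots,\xh_k,\ldots,x_n,a),
\end{align*}
the last equality being Definition \ref{dec:interior}. Since every step is a direct appeal to an established statement, there is no genuine obstacle; the only care required is index bookkeeping --- verifying that fixing one element drops the arity from $n$ to $n-1$ and removes $\alpha_{n-1}$ from the list of twisting maps, that re-inducing with $\tau$ and $\alpha_{n-1}$ restores the arity $n$ with twisting maps $\alpha_1,\ldots,\alpha_{n-1}$, and that the compatibility conditions invoked range exactly over $\{1,\ldots,n-1\}$.
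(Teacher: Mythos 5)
Your proof is correct and takes essentially the same route the paper intends: the paper states this proposition without a written proof, presenting it as the composition of Proposition \ref{prop:yaugoingdown} (with $k=1$, so that the fixed-point condition reduces to $\alpha_{n-1}(a)=a$) and Theorem \ref{thm:inducedhomnambulie} applied at arity $n-1$ with $\alpha_{n-1}$ as the added twisting map, which is exactly what you carry out. Your bookkeeping --- that the trace hypothesis, the compatibility range $\{1,\ldots,n-1\}$, and the substitution of Definition \ref{dec:interior} into Definition \ref{def:phitau} all line up --- is the entire content of the argument.
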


\noindent One notes that the two types of twistings are in general not
equivalent. In fact, assuming the two procedures in Propositions
\ref{prop:updown} and \ref{prop:downup} are well defined for some
element $a\in V$ and denoting $\phit\equiv i_\tau\phi$, one can write
\begin{align}
  \paraa{[i_\tau,\pi_a]\phi}(x_1,\ldots,x_n) = (-1)^n\tau(a)\phi(x_1,\ldots,x_n).
\end{align}
Thus, unless $a\in\ker\tau$, one recovers the ``untwisted''
$n$-Hom-Nambu-Lie algebra as the commutator of the maps $i_\tau$ and
$\pi_a$. If $a\in\ker\tau$ then the two procedures yield the same
result.

Recalling the possible cases for the relation between
$\alpha_1,\ldots,\alpha_n$ and the kernel of $\tau$, one notes that in
case (\ref{eq:C2}) any fixed point of $\alpha_i$ is necessarily in the
kernel of $\tau$, which implies that $[i_\tau,\pi_a]=0$. In case
(\ref{eq:C1}) there might be fixed points in $U$.

\section{Higher order constructions}\label{sec:higherorder}

\noindent A natural extension of the current framework would be to
construct a $(n+p)$-Lie algebra from an $n$-Lie algebra and a $p$-form
by using the wedge product. Let us illustrate how this can be done and
investigate the connection to a closely related kind of algebras,
satisfying the so called \emph{generalized Jacobi identity}.

\begin{definition}\label{def:tauwedgephi}
  Let $(V,\phi,\alpha_1,\ldots,\alpha_{n-1})$ be an $n$-ary
  Hom-Nambu-Lie algebra and let $\tau\in\wedge^p V$ be a
  $p$-form. Define $\tau\wedge\phi:V^{n+p}\to V$ by
  \begin{align*}
    \paraa{\tau\wedge\phi}(x_1,\ldots,x_{n+p}) &=
    \frac{1}{n!p!}\sum_{\sigma\in S_{n+p}}\sgn(\sigma)\tau(x_{\sigma(1)},\ldots,x_{\sigma(p)})\phi(x_{\sigma(p+1)},\ldots,x_{\sigma(n+p)})\\
    &\equiv \phit(x_1,\ldots,x_{n+p})
  \end{align*}
  for all $x_1,\ldots,x_{n+p}\in V$.
\end{definition}

\noindent There is a natural extension of the concept of $\phi$-traces
to $p$-forms.

\begin{definition}\label{def:phicompatiblepform}
  For $\phi:V^n\to V$ we call $\tau\in\wedge^p V$ a
  \emph{$\phi$-compatible $p$-form} if
  $\tau(\phi(x_1,\ldots,x_n),y_1,\ldots,y_{p-1})=0$ for all
  $x_1,\ldots,x_n,y_1,\ldots,y_{p-1}\in V$.
\end{definition}

\noindent The fundamental identity of $n$-Lie algebras is not a
complete symmetrization of an iterated bracket. The generalized Jacobi
identity is a more symmetric extension of the standard Jacobi
identity.

\begin{definition}\label{def:gji}
  A vector valued form $\phi\in\wedge^n(V,V)$ is said to satisfy the
  \emph{generalized Jacobi identity} if
  \begin{align}
    \sum_{\sigma\in S_{2n-1}}\sgn(\sigma)\phi\paraa{\phi(x_{\sigma(1)},\ldots,x_{\sigma(n)}),x_{\sigma(n+1)},\ldots,x_{\sigma(2n-1)}}=0
  \end{align}
  for all $x_1,\ldots,x_{2n-1}\in V$.
\end{definition}

\noindent In the following we study the question when
$\tau\wedge\phi$ satisfies the generalized Jacobi identity, or the
fundamental identity of $n$-Lie algebras.

For $\phi\in\wedge^{k}(V,V)$ and $\psi\in\wedge^{l+1}(V)$ (or $\wedge^{l+1}(V,V)$)
one defines the \emph{interior product $i_\phi\psi\in\wedge^{k+l}(V)$} (resp. $\wedge^{l+1}(V,V)$) as
\begin{align}
  i_\phi\psi(x_1,\ldots,x_{k+l}) = \frac{1}{k!l!}
  \sum_{\sigma\in S_{k+l}}\psi\paraa{\phi(x_{\sigma(1)},\ldots,x_{\sigma(k)}),x_{\sigma(k+1)},\ldots,x_{\sigma(k+l)}}.
\end{align}
The generalized Jacobi identity for $\phi$ can then be expressed as
$i_\phi\phi=0$. The interior product satisfies the following
properties with respect to the wedge product (see e.g. \cite{m:remnijenhuis})
\begin{align}
  &i_{\tau\wedge\phi}\psi = \tau\wedge(i_{\phi}\psi)\\
  &i_\phi(\tau\wedge\psi) = (i_\phi\tau)\wedge\psi+(-1)^{(k-1)p}\tau\wedge(i_\phi\psi),
\end{align}
where $\tau\in\Lambda^p(V)$. With these set of relations at hand, one
can now easily prove the following statement.

\begin{proposition}\label{prop:tauwedgephiGJI}
  If $\phi\in\wedge^{n}(V,V)$ satisfies the generalized Jacobi
  identity and $\tau$ is a $\phi$-compatible $p$-form, then
  $\tau\wedge\phi$ satisfies the generalized Jacobi identity.
\end{proposition}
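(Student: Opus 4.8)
The plan is to express the generalized Jacobi identity for $\tau\wedge\phi$ entirely in terms of the interior product operator and then reduce it to the two algebraic identities for $i$ that are quoted just before the statement. Recall that the generalized Jacobi identity for a vector-valued form $\psi$ is precisely the equation $i_\psi\psi = 0$. So the goal is to show $i_{\tau\wedge\phi}(\tau\wedge\phi) = 0$, given that $i_\phi\phi = 0$ (the hypothesis that $\phi$ satisfies the generalized Jacobi identity) and that $\tau$ is $\phi$-compatible.

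First I would apply the first quoted relation, $i_{\tau\wedge\phi}\psi = \tau\wedge(i_\phi\psi)$, with $\psi = \tau\wedge\phi$, to obtain
\begin{align*}
  i_{\tau\wedge\phi}(\tau\wedge\phi) = \tau\wedge\paraa{i_\phi(\tau\wedge\phi)}.
\end{align*}
Next I would expand the inner term $i_\phi(\tau\wedge\phi)$ using the second quoted relation, the graded Leibniz rule for $i_\phi$ acting on a wedge product:
\begin{align*}
  i_\phi(\tau\wedge\phi) = (i_\phi\tau)\wedge\phi + (-1)^{(n-1)p}\,\tau\wedge(i_\phi\phi).
\end{align*}
The second summand vanishes because $i_\phi\phi = 0$ by hypothesis. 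For the first summand, I would observe that $i_\phi\tau$ is, up to the combinatorial normalization in the definition of the interior product, exactly the $(p-1)$-form $(x_1,\dots,x_{p-1})\mapsto \tau(\phi(\,\cdot\,),x_1,\dots,x_{p-1})$ symmetrized over its arguments; since $\tau$ is a $\phi$-compatible $p$-form, every such evaluation is zero, hence $i_\phi\tau = 0$. Therefore $i_\phi(\tau\wedge\phi) = 0$, and wedging with $\tau$ gives $i_{\tau\wedge\phi}(\tau\wedge\phi) = 0$, which is the generalized Jacobi identity for $\tau\wedge\phi$.

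The main obstacle, such as it is, is bookkeeping rather than conceptual: one must make sure the graded Leibniz rule is applied with the correct degree in the sign exponent (here $\phi\in\wedge^n(V,V)$ has $k = n$, so the sign is $(-1)^{(n-1)p}$, but this term dies anyway), and one must carefully match the definition of $i_\phi\tau$ against Definition \ref{def:phicompatiblepform} to confirm that $\phi$-compatibility really is the statement $i_\phi\tau = 0$ and not merely a consequence of it. Since the interior product $i_\phi\tau$ is a symmetrization over $S_{(k-1)+(l)}$-type permutations of the single expression $\tau(\phi(x_{\sigma(1)},\dots,x_{\sigma(k)}),x_{\sigma(k+1)},\dots)$, and $\phi$-compatibility forces each of these to vanish term by term, the identification is immediate. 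Thus the proof is essentially a two-line manipulation once the interior-product calculus from \cite{m:remnijenhuis} is in place.
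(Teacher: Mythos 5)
Your proof is correct and follows exactly the paper's own argument: express the generalized Jacobi identity as $i_{\tau\wedge\phi}(\tau\wedge\phi)=0$, apply the two quoted interior-product identities, and kill both resulting terms using $i_\phi\phi=0$ and $i_\phi\tau=0$ (the latter being precisely $\phi$-compatibility of $\tau$). The only cosmetic difference is that you conclude $i_\phi(\tau\wedge\phi)=0$ before wedging with $\tau$, while the paper wedges first and then observes the product vanishes.
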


\begin{proof}
  The generalized Jacobi identity can be expressed as
  $i_{\tau\wedge\phi}(\tau\wedge\phi)=0$. One computes
  \begin{align*}
    i_{\tau\wedge\phi}(\tau\wedge\phi) &= \tau\wedge\paraa{i_\phi(\tau\wedge\phi)}
    = \tau\wedge\paraa{(i_\phi\tau)\wedge\phi+(-1)^{(n-1)p}\tau\wedge(i_\phi\phi)}\\
    &= \tau\wedge(i_\phi\tau)\wedge\phi = 0,
  \end{align*}
  since $\tau$ is $\phi$-compatible (which implies $i_\phi\tau=0$) and
  $i_\phi\phi=0$ by assumption.
\end{proof}

\noindent It is known that the bracket of any $n$-Lie algebra
satisfies the generalized Jacobi identity (see
e.g. \cite{aip:genpoissonstruct}), which can be shown by symmetrizing
the fundamental identity of the $n$-Lie algebra. Starting from an $n$-Lie
algebra, Proposition \ref{prop:tauwedgephiGJI} does not guarantee that
$\tau\wedge\phi$ defines a Nambu-Lie algebra. However, if we assume
that
\begin{align*}
  \parab{\paraa{i_{x_1\cdots x_{p-1}}\tau}\wedge\tau}(y_1,\ldots,y_{p+1})
  = \paraa{\tau(x_1,\ldots,x_{p-1},\cdot)\wedge\tau}(y_1,\ldots,y_{p+1}) = 0
\end{align*}
for all $x_1,\ldots,x_{p-1},y_1,\ldots,y_{p+1}\in V$, then the desired result follows.

\begin{proposition}\label{prop:tauwedgephiNambuLie}
  Let $(V,\phi)$ be an $n$-Nambu-Lie algebra and assume that $\tau$ is
  a $\phi$-compatible $p$-form such that $(i_{x_1\cdots
    x_{p-1}}\tau)\wedge\tau=0$ for all $x_1,\ldots,x_{p-1}\in V$. Then
  $(V,\tau\wedge\phi)$ is a $(n+p)$-Nambu-Lie algebra.
\end{proposition}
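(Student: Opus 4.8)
We want to show that $\tau\wedge\phi$ satisfies the fundamental (Nambu) identity for $(n+p)$-Nambu-Lie algebras. The strategy is to pass from the fundamental identity to the generalized Jacobi identity, which is more amenable to the interior-product calculus set up above, and then to argue that the two identities coincide under the stated hypothesis. Concretely, I would proceed in three steps.

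First, I would recall that for an $n$-Nambu-Lie algebra the bracket $\phi$ satisfies the generalized Jacobi identity $i_\phi\phi=0$ (stated in the paragraph before the proposition, obtained by symmetrizing the fundamental identity). Hence by Proposition \ref{prop:tauwedgephiGJI}, using that $\tau$ is $\phi$-compatible, the form $\psi:=\tau\wedge\phi\in\wedge^{n+p}(V,V)$ also satisfies the generalized Jacobi identity: $i_\psi\psi=0$.

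Second — and this is where the extra hypothesis $(i_{x_1\cdots x_{p-1}}\tau)\wedge\tau=0$ enters — I would show that for the specific form $\psi=\tau\wedge\phi$ the generalized Jacobi identity is in fact \emph{equivalent} to the fundamental identity. The point is the following: the fundamental identity of an $m$-ary bracket $\psi$ is obtained from a partial antisymmetrization (antisymmetrizing only in the "outer" $m-1$ slots while keeping the $m-1$ "fixed" arguments $x_1,\dots,x_{m-1}$ as a block), whereas the generalized Jacobi identity antisymmetrizes over all $2m-1$ arguments. The difference between the two expressions is a sum of terms in which at least two of the $x_i$ get swapped between the two brackets; one tracks, term by term, how such a swap produces — after using the $\phi$-compatibility of $\tau$ to kill contributions where $\phi$ sits inside $\tau$ — expressions of the shape $(i_{x_1\cdots x_{p-1}}\tau)\wedge\tau$ wedged with $\phi$, which vanish by assumption. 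Thus $i_\psi\psi=0$ forces the fundamental identity for $\psi$, and conversely. I would phrase this last step using the interior-product identities: the fundamental identity for $\psi$ can be written as $i_\phi\psi - (\text{cross terms}) = 0$ after pulling out the $\tau$ factors, and the cross terms are precisely governed by $(i_\bullet\tau)\wedge\tau$ via the Leibniz rule $i_\phi(\tau\wedge\psi)=(i_\phi\tau)\wedge\psi+(-1)^{(k-1)p}\tau\wedge(i_\phi\psi)$ applied repeatedly.

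Third, combining the two, $(V,\tau\wedge\phi)$ satisfies the fundamental identity, and since $\tau\wedge\phi$ is manifestly totally skew-symmetric (it is defined by a signed sum over $S_{n+p}$, as in Lemma \ref{lemma:phitlinantisym}), it is a $(n+p)$-Nambu-Lie algebra.

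\textbf{Expected main obstacle.} The routine-looking but genuinely delicate step is the second one: making precise the combinatorial bookkeeping that identifies the discrepancy between the generalized Jacobi identity and the fundamental identity as a sum of terms annihilated by the hypothesis $(i_{x_1\cdots x_{p-1}}\tau)\wedge\tau=0$. One has to be careful about how the $\phi$-compatibility of $\tau$ (which kills all terms where a $\phi$-output feeds into $\tau$) interacts with the reshuffling of arguments between the inner and outer brackets, and about signs. I would expect to organize this by writing both identities in interior-product form and reducing everything to an identity among wedge products of $\tau$, $i_\bullet\tau$, and $\phi$, so that the two given conditions on $\tau$ can be invoked cleanly rather than through an explicit index-chase over $S_{2n+2p-1}$.
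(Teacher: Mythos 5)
There is a genuine gap in your second step, and it is not just a matter of bookkeeping. Your plan derives the fundamental identity for $\psi=\tau\wedge\phi$ from the \emph{generalized Jacobi identity} $i_\psi\psi=0$ plus the hypothesis $(i_{x_1\cdots x_{p-1}}\tau)\wedge\tau=0$. But the only input about $\phi$ that enters this route is $i_\phi\phi=0$ (via Proposition \ref{prop:tauwedgephiGJI}), and for $n\geq 3$ the generalized Jacobi identity is strictly weaker than the fundamental identity. So if your reduction worked, it would prove the fundamental identity for $\tau\wedge\phi$ for any $\phi$ satisfying merely $i_\phi\phi=0$, which cannot be right: the conclusion genuinely uses the fundamental identity of $\phi$ itself. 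Concretely, the discrepancy between the two identities is not what you describe. The generalized Jacobi identity for $\psi$ sums over \emph{all} distributions of the $2(n+p)-1$ arguments between the inner and outer brackets, whereas the fundamental identity involves only two kinds of distributions (all $y$'s inside, or all $x$'s plus one $y_k$ inside). The surplus terms include distributions where the inner bracket holds an arbitrary mix of $x$'s and $y$'s; after expanding $\psi$ into $\tau$'s and $\phi$'s, many of these carry coefficients such as $\tau$ of a pure $y$-subset times $\tau$ of another pure $y$-subset, or $\tau$ of several $x$'s together with several $y$'s. These are not of the shape $(i_{x_1\cdots x_{p-1}}\tau)\wedge\tau$ and are not annihilated by the hypothesis, so the claimed equivalence fails.

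The paper's proof does not pass through the generalized Jacobi identity at all. It verifies the fundamental identity for $\tau\wedge\phi$ directly: expanding both sides over the choices of $p$-subsets fed into $\tau$, and using $\phi$-compatibility to discard terms where a $\phi$-output lands inside a $\tau$, the terms split into two types. Type $B$ (where the inner $\tau$ acts only on $x$'s) reassembles, with coefficients $\tau(X)\tau(Y)$, into instances of the fundamental identity of $\phi$ and cancels against the term $(\tau\wedge\phi)(x_1,\ldots,x_m)$. Type $A$ (where the inner $\tau$ catches the transported argument $y_k$ together with $p-1$ of the $x$'s) is proportional to $\bigl((i_{x_{\rho(1)}\cdots x_{\rho(p-1)}}\tau)\wedge\tau\bigr)$ evaluated on the remaining $y$'s, and vanishes by hypothesis. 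If you want to salvage your outline, you must replace the appeal to $i_\psi\psi=0$ by this direct expansion, invoking the fundamental identity of $\phi$ for the aligned terms; Proposition \ref{prop:tauwedgephiGJI} cannot substitute for it.
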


\begin{proof}
  Let us start by introducing some notation. Let
  $\X=(x_1,\ldots,x_{m-1})$ and $\Xt=(x_1,\ldots,x_m)$ be ordered
  sets, and let $X$ be an ordered subset of $\Xt$ such that
  $X=(x_{i_1},\ldots,x_{i_p})$ with $i_k<i_l$ if $k<l$. By
  $\Xt\backslash X$ (or $\X\backslash X$) we mean the ordered set
  obtained from $\Xt$ (or $\X$) by removing the elements in
  $X$. Similarly, we set $\Y=(y_1,\ldots,y_m)$ and let $Y$ be an
  ordered subset of $\Y$.

  Let $x_1,\ldots,x_{m-1},y_1,\ldots,y_m$ be arbitrary elements of $V$
  and set $$x_m=\paraa{\tau\wedge\phi}(y_1,\ldots,y_m).$$ 
  The fundamental identity can then be written as
  \begin{equation}\label{eq:NJIproof}
    \begin{split}
      \text{FI} &=
      \sum_{k=1}^m\paraa{\tau\wedge\phi}(y_1,\ldots,y_{k-1},(\tau\wedge\phi)
      (x_1,\ldots,x_{m-1},y_k),y_{k+1},\ldots,y_m)\\
      &\qquad\qquad-\paraa{\tau\wedge\phi}(x_1,\ldots,x_m)=0.
    \end{split}
  \end{equation}
  Since $\tau$ is compatible with $\phi$, all of the terms with $x_m$
  inside $\tau$ vanish, and one can rewrite the second term in
  (\ref{eq:NJIproof}) as
  \begin{align}\label{eq:NJIFirstXY}
    \paraa{\tau\wedge\phi}(x_1,\ldots,x_m)
    = \sum_{X\subset\X,Y\subset\Y}\sgn(X)\sgn(Y)\tau(X)\tau(Y)
    \phi\paraa{\X\backslash X,\phi(\Y\backslash Y)},
  \end{align}
  where $\sgn(X)$ denotes the sign of the permutation $\sigma\in S_{m}$ such
  that
  \begin{align*}
    X &= (x_{\sigma(1)},\ldots,x_{\sigma(p)})\\
    (\X\backslash X,x_m) &= (x_{\sigma(p+1)},\ldots,x_{\sigma(m)}),
  \end{align*}
  and $\sgn(Y)$ denotes the sign of the permutation $\rho\in S_m$ such
  that
  \begin{align*}
    Y &= (y_{\rho(1)},\ldots,y_{\rho(p)})\\
    \Y\backslash Y &= (y_{\rho(p+1)},\ldots,y_{\rho(m)}).
  \end{align*}
  Let us now turn to the first term in (\ref{eq:NJIproof}), which will
  generate two types of terms. The first kind, $A$, will include a
  $\tau$ acting on $y_k$, and the second kind, $B$, include no such
  $\tau$. One can now rewrite $B$ in a fashion similar to
  (\ref{eq:NJIFirstXY})
  \begin{align*}
    B &= \sum_{X\subset\X,Y\subset\Y}\sgn(X)\sgn(Y)\tau(X)\tau(Y)\times\\
    &\qquad\sum_{y_{i_k}\in\Y\backslash Y}
    \phi\paraa{y_{i_1},\ldots,y_{i_{k-1}},\phi(\X\backslash X,y_{i_k}),y_{i_{k+1}},\ldots,y_{i_n}},
  \end{align*}
  where $(y_{i_1},\ldots,y_{i_n})=\Y\backslash Y$. Now, one notes that
  subtracting $B$ from (\ref{eq:NJIFirstXY}) gives zero due to the
  fact that $\phi$ satisfies the fundamental identity. Thus, there
  will only be terms of type $A$ left in (\ref{eq:NJIproof}). To
  rewrite these terms we introduce yet some notation.

  Let $\Yt=(y_{i_1},\ldots,y_{i_{n-1}})$ be an ordered subset of $Y$
  and let $\Xb$ be an ordered subset of $\X$ with $|\Xb|=n$. By
  $\sgn(\Yt_k)$ we denote the sign of the permutation $\sigma$ such
  that $\sigma(m)=k$ and
  \begin{align*}
    \Yt &= (y_{\sigma(p+1)},\ldots,y_{\sigma(m-1)})\\
    \Y\backslash\Yt &= (y_{\sigma(1)},\ldots,y_{\sigma(p)}),
  \end{align*}
  and by $\sgn(\Xb)$ we denote the sign of the permutation $\rho$ such
  that $\rho(p)=m$ and
  \begin{align*}
    \Xb &= (x_{\rho(p+1)},\ldots,x_{\rho(m)})\\
    \X\backslash\Xb &= (x_{\rho(1)},\ldots,x_{\rho(p-1)})
  \end{align*}
  with the definition $x_m= y_k$. In this notation, the terms of type
  $A$ can be written as
  \begin{align*}
    A &= \sum_{\Yt\subset\Y,\Xb\subset\X}\!\!\!\!
    \sgn(\Xb)\phi\paraa{\Yt,\phi(\Xb)}\sum_{y_k\in\Y\backslash\Yt}\sgn(\Yt_k)
    \tau\paraa{\X\backslash\Xb,y_k}\tau\paraa{\Y\backslash\Yt\backslash\{y_k\}}\\
    &\propto\sum_{\Yt\subset\Y,\Xb\subset\X}\!\!\!\!
    \sgn(\Xb)\phi\paraa{\Yt,\phi(\Xb)}
    \paraa{(i_{x_{\rho(1)}\cdots x_{\rho(p-1)}}\tau)\wedge\tau}(\Y\backslash\Yt) = 0,
  \end{align*}
  since $(i_{x_1\cdots x_{p-1}}\tau)\wedge\tau=0$ for all $x_1,\ldots,x_{p-1}\in V$.
\end{proof}

\noindent The following example provides a generic construction in the
case when $\phi$ maps $V^n$ onto a proper subspace of $V$.

\begin{example}
  Let $(V,\phi)$ be an $n$-Lie algebra, with $\dim(V)=m$, such
  that $\phi:V^n\to U$, where $U$ is a $m-p$ dimensional subspace of
  $V$ ($p\geq 1$). Given a basis $u_1,\ldots,u_{m-p}$ of $U$, we define
  $\tau\in\bigwedge^pV$ as
  \begin{align*}
    \tau(v_1,\ldots,v_p) = \det(v_1,\ldots,v_p,u_1,\ldots,u_{m-p}).
  \end{align*}
  Then $\tau$ is a $\phi$-compatible $p$-form on $V$. Let us now show
  that $(i_{v_1\cdots v_{p-1}}\tau)\wedge\tau=0$. From the definition
  of $\tau$ one obtains
  \begin{align*}
    &\paraa{(i_{v_1\cdots v_{p-1}}\tau)\wedge\tau}(w_1,\ldots,w_{p+1})\\
    &\qquad=\sum_{\sigma\in S_{p+1}}\sgn(\sigma)
    \det(v_1,\ldots,v_{p-1},w_{\sigma(1)},u_1,\ldots,u_{m-p})\\
    &\qquad\qquad\times\det(w_{\sigma(2)},\ldots,w_{\sigma(p+1)},u_1,\ldots,u_{m-p}).
  \end{align*}
  For this to be non-zero, one needs first of all that
  $v_1,\ldots,v_{p-1},u_1,\ldots,u_{m-p}$ and $w_1,\ldots,w_{p+1}$ are
  two sets of linearly independent vectors. Even though this is is the
  case every term in the sum is zero since for a non-zero result one
  needs that $w_1,\ldots,w_{p+1}$ are linearly independent of
  $u_1,\ldots,u_{m-p}$, which is impossible due to the fact that
  $w_1,\ldots,w_{p+1}$ are linearly independent and
  $\dim(V)=m$. Hence, by Proposition \ref{prop:tauwedgephiNambuLie},
  $(V,\tau\wedge\phi)$ is a $(n+p)$-Nambu-Lie algebra.
\end{example}


\begin{thebibliography}{dAIPB97}

\bibitem[dAIPB97]{aip:genpoissonstruct}
J.~A. de~Azc{\'a}rraga, J.~M. Izquierdo, and J.~C. P{\'e}rez~Bueno.
\newblock On the higher-order generalizations of {P}oisson structures.
\newblock {\em J. Phys. A}, 30(18):L607--L616, 1997.

\bibitem[Mic87]{m:remnijenhuis}
Peter~W. Michor.
\newblock Remarks on the {F}r\"olicher-{N}ijenhuis bracket.
\newblock In {\em Differential geometry and its applications ({B}rno, 1986)},
  volume~27 of {\em Math. Appl. (East European Ser.)}, pages 197--220. Reidel,
  Dordrecht, 1987.

\bibitem[Yau10]{y:nhomliegoingdown}
D.~Yau.
\newblock On $n$-ary {H}om-{N}ambu and {H}om-{N}ambu-{L}ie algebras.
\newblock \texttt{arXiv:1004.2080}, 2010.

\end{thebibliography}


\begin{thebibliography}{99}

\bibitem{Kerner7}  Abramov V., Le Roy B.,  and Kerner R., Hypersymmetry: a $\mathbb{Z}_3$-graded generalization of supersymmetry,  J. Math. Phys., \textbf{38} (3), 1650--1669 (1997).

\bibitem{AizawaSaito}
Aizawa, N., Sato, H., $q$-deformation of the Virasoro algebra with central extension, Phys. Lett. B
\textbf{256}, no. 1, 185--190 (1991). Hiroshima University preprint
preprint HUPD-9012 (1990).

\bibitem{abhhs:fuzzy}
Arnlind J., Bordemann M., Hofer L., Hoppe J., and Shimada H.,
Fuzzy Riemann surfaces, J. High Energy Phys., no. 6, 047, 18 pp. (2009). 
\texttt{hep-th/0602290}.

\bibitem{abhhs:noncommutative}
Arnlind J., Bordemann M., Hofer L., Hoppe J., and Shimada H.,
Noncommutative {R}iemann surfaces by embeddings in {$\Bbb R\sp 3$}.
{\em Comm. Math. Phys.}, \textbf{288} (2): 403--429 (2009).

\bibitem{a:phdthesis}
Arnlind J., {\em Graph Techniques for Matrix Equations and Eigenvalue Dynamics}.
PhD thesis, Royal Institute of Technology, 2008.

\bibitem{a:repcalg}
Arnlind J., Representation theory of {$C$}-algebras for a higher-order class of
spheres and tori, {\em J. Math. Phys.}, \textbf{49} (5): 053502, 13 (2008).

\bibitem{as:affinecrossed}
Arnlind J., Silvestrov S.,
Affine transformation crossed product type algebras and noncommutative surfaces
{\em Contemp. Math. } \textbf{503}, 2009.

\bibitem{ahh:multilinear} Arnlind J., Hoppe J., Huisken G., Multi linear formulation of differential geometry and matrix regularizations. \texttt{arXiv:1009.4779} (2010).

\bibitem{ams:ternary} Arnlind J., Makhlouf A., Silvestrov S., 
Ternary Hom-Nambu-Lie algebras induced by Hom-Lie algebras, J. Math. Phys. \textbf{51}, 043515, 11 pp. (2010)

\bibitem{ams:gennambu}  Ataguema H.,  Makhlouf A., Silvestrov S., Generalization of n-ary Nambu
algebras and beyond. J. Math. Phys. 50, 083501, (2009). (DOI:10.1063/1.3167801)

\bibitem{amdy:quantnambu} Awata H., Li M., Minic D., Yoneya T., On the quantization of Nambu brackets,
J. High Energy Phys. \textbf{2}, Paper 13, 17 pp. (2001)

\bibitem{aip:genpoissonstruct}
 de~Azc{\'a}rraga  J.~A.,  Izquierdo J.~M., and P{\'e}rez~Bueno  J.~C., 
On the higher-order generalizations of {P}oisson structures, {\em J. Phys. A}, \textbf{30} (18): L607--L616 1997.

\bibitem{aip:reveiw}
de~Azc{\'a}rraga  J.~A.,  Izquierdo J.~M.,  n-ary algebras: a review with applications, J. Phys. A43 (2010) 293001-1-117.

\bibitem{BL2007}  Bagger J., Lambert N., Gauge Symmetry and
Supersymmetry of Multiple M2-Branes, \texttt{arXiv:0711.0955} (2007).

\bibitem{IGSC}  Caenepeel S.,  Goyvaerts I., Hom-Hopf algebras. Preprint arXiv: 0907.0187, (2009)

\bibitem{CassasLodayPirashvili}  Cassas J. M., Loday J.-L. and Pirashvili T.,  Leibniz
 $n$-algebras, Forum Math. \textbf{14}, 189--207 (2002).


\bibitem{ChaiElinPop} Chaichian M., Ellinas D. and
Z. Popowicz, Quantum conformal algebra with
central extension, Phys. Lett. B \textbf{248}, no. 1-2, 95--99 (1990).

\bibitem{ChaiKuLukPopPresn} Chaichian M.,
Isaev A. P.,  Lukierski J., Popowic Z. and
Pre\v{s}najder P., $q$-deformations of Virasoro
algebra and conformal dimensions, Phys. Lett. B
\textbf{262} (1), 32--38 (1991).

\bibitem{ChaiKuLuk}  Chaichian M., Kulish P. and
Lukierski J.,  $q$-deformed Jacobi identity, $q$-oscillators and $q$-deformed
infinite-dimensional algebras, Phys. Lett. B
\textbf{237} , no. 3-4, 401--406 (1990).

\bibitem{ChaiPopPres} Chaichian M.,
Popowicz Z., Pre\v{s}najder P.,  $q$-Virasoro
algebra and its relation to the $q$-deformed KdV
system, Phys. Lett. B \textbf{249}, no. 1,
63--65 (1990).

\bibitem{CurtrZachos1} Curtright T. L.,  Zachos C. K.,
Deforming maps for quantum algebras, Phys. Lett.
B \textbf{243}, no. 3, 237--244, (1990).

\bibitem{DaskaloyannisGendefVir}
Daskaloyannis C., Generalized deformed Virasoro
algebras, Modern Phys. Lett. A \textbf{7} no. 9, 809--816 (1992)

\bibitem{f:nliealgebras} Filippov V.~T., n-Lie algebras,
(Russian), Sibirsk. Mat. Zh. \textbf{26}, no. 6, 126--140 (1985).
(English translation: Siberian Math. J. \textbf{26}, no. 6, 879--891 (1985))

\bibitem{HLS} Hartwig J. T., Larsson D., Silvestrov S. D.,
Deformations of Lie algebras using $\sigma$-derivations, J.
of Algebra \textbf{295},  314--361 (2006). Preprint
in Mathematical Sciences 2003:32, LUTFMA-5036-2003, Centre for
Mathematical Sciences, Department of Mathematics, Lund Institute
of Technology, 52 pp. (2003).

\bibitem{HoppeJMalgNambumech} Hoppe J.,
On $M$-algebras, the quantisation of Nambu-mechanics, and volume preserving diffeomorphisms,
Helv. Phys. Acta \textbf{70}, no. 1-2, 302--317 (1997). arXiv:hep-th/9602020v1

\bibitem{Hu}  Hu N., $q$-Witt algebras,
$q$-Lie algebras, $q$-holomorph structure and
representations,  Algebra Colloq. \textbf{6},
no. 1, 51--70 (1999).

\bibitem{Kerner} Kerner R., Ternary algebraic structures and their
applications in physics, in the ``Proc. BTLP 23rd International
Colloquium on Group Theoretical Methods in Physics'', (2000).
arXiv:math-ph/0011023v1

\bibitem{Kerner2} Kerner R., $\mathbb{Z}_3$-graded algebras and non-commutative gauge theories,
dans le livre "Spinors, Twistors, Clifford Algebras and Quantum
Deformations", Eds. Z. Oziewicz, B. Jancewicz, A. Borowiec, pp.
349--357, Kluwer Academic Publishers (1993).

\bibitem{Kerner4} Kerner R., The cubic chessboard: Geometry and physics, Classical Quantum Gravity \textbf{14}, A203-A225 (1997)

\bibitem{Kerner6} Kerner R., Vainerman L., On special classes of
$n$-algebras, J. Math. Phys., \textbf{37} (5), 2553--2565 (1996).

\bibitem{LS1} Larsson D., Silvestrov S. D., Quasi-Hom-Lie algebras, Central Extensions and $2$-cocycle-like
identities. J. Algebra \textbf{288}, 321--344 (2005).
Preprints in Mathematical Sciences  2004:3, LUTFMA-5038-2004, Centre for Mathematical Sciences, Department of Mathematics, Lund Institute of Technology, Lund University, 2004.

\bibitem{LS2} Larsson D.,  Silvestrov S. D., Quasi-Lie algebras. In
"Noncommutative Geometry and Representation Theory in Mathematical Physics". Contemp. Math., 391, Amer. Math. Soc., Providence, RI, (2005), 241--248. Preprints in Mathematical Sciences 2004:30, LUTFMA-5049-2004, Centre for Mathematical Sciences, Department of Mathematics, Lund Institute of Technology, Lund University, 2004.

\bibitem{LS3}
Larsson D., Silvestrov S. D., Quasi-deformations of $sl_2(\mathbb{F})$ using twisted derivations, 
Comm. in Algebra \textbf{35} , 4303 -- 4318 (2007).

\bibitem{LiuKQuantumCentExt} Liu, K. Q., Quantum central extensions,
C. R. Math. Rep. Acad. Sci. Canada \textbf{13} (4), 135--140 (1991).

\bibitem{LiuKQCharQuantWittAlg} Liu K. Q., Characterizations of the
Quantum Witt Algebra, Lett. Math. Phys. \textbf{24} (4), 257--265 (1992)

\bibitem{LiuKQPhDthesis} Liu K. Q., The Quantum Witt Algebra and
Quantization of Some Modules over Witt Algebra, PhD Thesis, Department of Mathematics, University of Alberta, Edmonton, Canada (1992)

\bibitem{MS} Makhlouf A., Silvestrov S. D., Hom-algebra structures.
J. Gen. Lie Theory Appl. Vol \textbf{2} (2), 51--64 (2008).
Preprints in Mathematical Sciences  2006:10, LUTFMA-5074-2006, Centre for Mathematical Sciences, Department of Mathematics, Lund Institute of Technology, Lund University, 2006.

\bibitem{HomHopf} Makhlouf A., Silvestrov S. D., Hom-Lie admissible Hom-coalgebras and Hom-Hopf algebras. In "Generalized Lie
theory in Mathematics, Physics and Beyond.
S. Silvestrov, E. Paal, V. Abramov, A. Stolin, Editors". Springer-Verlag, Berlin,
Heidelberg, Chapter 17, 189--206, (2009).
Preprints in Mathematical Sciences, Lund University, Centre for Mathematical Sciences, Centrum Scientiarum Mathematicarum (2007:25) LUTFMA-5091-2007 and in arXiv:0709.2413 [math.RA] (2007)

\bibitem{HomAlgHomCoalg} Makhlouf A., Silvestrov S. D., Hom-Algebras and Hom-Coalgebras. Journal of Algebra and its Applications, Vol. \textbf{9}, (2010).
Preprints in Mathematical Sciences, Lund University, Centre for Mathematical Sciences, Centrum Scientiarum Mathematicarum, (2008:19) LUTFMA-5103-2008 and in arXiv:0811.0400 [math.RA] (2008). 

\bibitem{HomDeform} Makhlouf A., Silvestrov S. D., Notes on 1-parameter formal deformations of Hom-associative and Hom-Lie algebras, 
Forum Math. 22 (2010), no. 4, 715--739.
Preprints in Mathematical Sciences, Lund University, Centre for Mathematical Sciences, Centrum Scientiarum Mathematicarum, (2007:31) LUTFMA-5095-2007. arXiv:0712.3130v1 [math.RA] (2007).

\bibitem{m:remnijenhuis}
Michor P. W., Remarks on the {F}r\"olicher-{N}ijenhuis bracket, 
In {\em Differential geometry and its applications ({B}rno, 1986)},
volume~27 of {\em Math. Appl. (East European Ser.)}, pages 197--220. Reidel,
Dordrecht, 1987.

\bibitem{n:generalizedmech} Nambu Y., Generalized Hamiltonian
mechanics, Phys. Rev. D (3), \textbf{7}, 2405--2412 (1973)

\bibitem{Takhtajan} Takhtajan L., On foundation of the generalized
Nambu mechanics, Comm. Math. Phys. \textbf{160}, 295--315 (1994)

\bibitem{Yau:EnvLieAlg} Yau D.,
Enveloping algebra of Hom-Lie algebras, J. Gen.
Lie Theory Appl. \textbf{2} (2), 95--108 (2008)

\bibitem{Yau:HomolHomLie} Yau D.,
Hom-algebras  and homology,
 J. Lie Theory \textbf{19} (2009) 409--421.

\bibitem{Yau:HomBial} Yau D.,
Hom-bialgebras and comodule algebras,
International Electronic Journal of Algebra\textbf{ 8} (2010) 45-64.



\bibitem{y:nhomliegoingdown}
Yau D., On $n$-ary {H}om-{N}ambu and {H}om-{N}ambu-{L}ie algebras, \texttt{arXiv:1004.2080}, 2010.

\bibitem{y:nhomliehommalcev}
Yau D., On n-ary Hom-Nambu and Hom-Maltsev algebras,	arXiv:1004.4795, 2010.

\bibitem{y:nhomliehomass}
Yau D., A Hom-associative analogue of n-ary Hom-Nambu algebras, 	arXiv:1005.2373, 2010.

\end{thebibliography}
\end{document}